\newtheorem{thm}{Theorem}[section]
\newtheorem{lem}[thm]{Lemma}
\newtheorem{prop}[thm]{Proposition}
\newtheorem{cor}[thm]{Corollary}
\newtheorem{conj}[thm]{Conjecture}
\theoremstyle{definition}
\newtheorem{remark}[thm]{Remark}
\newtheorem{defn}[thm]{Definition}
\numberwithin{equation}{section}
\newcommand{\sn}{\mathbb{S}_n}          
\newcommand{\R}{\mathbb{R}}
\newcommand{\one}{\mathbf{1}}  
\newcommand{\Sym}{\mathfrak{S}_n}       
\DeclareMathOperator{\tr}{tr}
\DeclareMathOperator{\diag}{diag}
\newcommand{\maj}{\prec}
\newcommand{\Rey}{\mathcal{R}}      
\begin{document}
	
	\title[Schur--Horn type inequalities for hyperbolic polynomials]
	{Schur--Horn type inequalities for hyperbolic polynomials}
	
	\author[T.~Zhang]{Teng Zhang}
	\address{School of Mathematics and Statistics, Xi'an Jiaotong University, Xi'an 710049, P. R. China}
	\email{teng.zhang@stu.xjtu.edu.cn}
	
	\subjclass[2020]{15A42 (Primary), 26C10, 13A50}
	\keywords{Schur--Horn theorem, hyperbolic polynomial, majorization,  Hadamard inequality, linear principal minor polynomial}
	
	\begin{abstract}
We establish a Schur--Horn type inequality for symmetric hyperbolic polynomials. As an immediate consequence, we resolve a conjecture of Nam Q. Le on Hadamard-type inequalities for hyperbolic polynomials. Our argument is based on Schur–-Horn theorem, Birkhorff theorem and Gårding’s concavity theorem for hyperbolicity cones.
Beyond the eigenvalue level, we develop a symmetrization principle on hyperbolicity cones:
if a hyperbolic polynomial is invariant under a finite group action, then its value increases under the associated Reynolds operator (group averaging).
Applied to the sign-flip symmetries of linear principal minor polynomials introduced by Blekherman et al.,
this yields a short  proof of the hyperbolic Fischer--Hadamard inequalities for PSD-stable lpm polynomials.
	\end{abstract}
	
	\maketitle
	
	\section{Introduction}\label{sec:intro}
	
	Let $\sn$ denote the set of all $n\times n$ real symmetric matrices. For $A=(a_{ij})\in\sn$, we write
	\[
	\lambda(A)=(\lambda_1(A),\ldots,\lambda_n(A))
	\]
	for the list of eigenvalues of $A$ and
	\[
	\diag(A)=(a_{11},\ldots,a_{nn})
	\]
	for its vector of diagonal entries.
	We also let $\one=(1,\ldots,1)\in\R^n$.
	
	For $\lambda=(\lambda_1,\dots,\lambda_n)\in\R^n$ and $1\le k\le n$, we denote by
	\[
	S_k(\lambda)
	:= \!\!\sum_{1\le i_1<\cdots<i_k\le n}\!\!\lambda_{i_1}\cdots\lambda_{i_k}
	\]
	the $k$th elementary symmetric function, and adopt the convention $S_0(\lambda)=1$.
	When $A\in \sn$, we write $S_k(A):=S_k(\lambda(A))$.
	
	\subsection{G{\aa}rding cones and $k$-positive matrices}
	
	We define the open, symmetric, convex cone
	\[
	\Gamma_k(n)\ :=\ \bigl\{\lambda\in\R^n:\ S_j(\lambda)>0\ \text{for all }1\le j\le k\bigr\}.
	\]
	The convexity of $\Gamma_k(n)$ is a standard consequence of G{\aa}rding's theory of
	hyperbolic polynomials; see \cite[Example~5.2]{Le22}.
	
	\begin{defn}[$k$-positive matrix {\cite[Definition~1.1]{Le22}}]
		A matrix $A\in \sn$ is called \emph{$k$-positive} if $\lambda(A)\in\Gamma_k(n)$.
	\end{defn}
	
	The classical Hadamard determinant inequality
	\cite[p.~505, Theorem~7.8.1]{HJ13} can be phrased as follows.
	
	\begin{thm}[Hadamard]\label{thm:hadamard}
		Let $A=(a_{ij})\in \sn$ be $n$-positive. Then
		\[
		S_n(A)\ \le\ S_n(\diag(A)).
		\]
		Moreover, equality holds if and only if $A$ is diagonal.
	\end{thm}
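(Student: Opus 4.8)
The plan is to read the inequality off from the Schur--Horn theorem together with the Schur-concavity of $S_n$ on the positive orthant. First I would record that a real vector $\lambda$ lies in $\Gamma_n(n)$ precisely when all its coordinates are positive: the polynomial $\prod_{i=1}^n(t+\lambda_i)=\sum_{j=0}^n S_j(\lambda)\,t^{n-j}$ has all coefficients positive exactly when $S_j(\lambda)>0$ for $1\le j\le n$, and in that case it has no root in $[0,\infty)$, so each of its (real) roots $-\lambda_i$ is negative; the converse is immediate. Hence an $n$-positive $A\in\sn$ is the same thing as a positive definite one, and both sides of the asserted inequality are positive, with $S_n(A)=\det A$ and $S_n(\diag(A))=\prod_i a_{ii}$.

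Next I would invoke the Schur--Horn theorem in the form $\diag(A)\maj\lambda(A)$: writing $A=U\Lambda U^{\top}$ with $\Lambda=\diag(\lambda(A))$ and $U$ orthogonal gives $a_{ii}=\sum_j u_{ij}^2\,\lambda_j(A)$, and $(u_{ij}^2)_{i,j}$ is doubly stochastic, so the diagonal is majorized by the spectrum; this is the point at which Birkhoff's theorem enters, should one prefer to expand the doubly stochastic matrix into permutations. The analytic input is that $F(\lambda):=S_n(\lambda)=\prod_i\lambda_i$ is Schur-concave on the open positive orthant: it is symmetric and satisfies Schur's criterion $(\lambda_i-\lambda_j)\bigl(\partial_i F-\partial_j F\bigr)=-(\lambda_i-\lambda_j)^2\!\prod_{k\ne i,j}\lambda_k\le 0$ there (equivalently, the geometric mean $F^{1/n}$ is symmetric and concave; in the general $k$-positive version this concavity would be supplied by G{\aa}rding's theorem for $S_k^{1/k}$ on $\Gamma_k(n)$). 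Feeding $\diag(A)\maj\lambda(A)$ into this yields $S_n(\diag(A))\ge S_n(\lambda(A))=S_n(A)$.

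For the equality clause I would upgrade the last inequality using that Schur's criterion above holds \emph{strictly} on the positive orthant --- $\partial_iF\ne\partial_jF$ whenever $\lambda_i\ne\lambda_j$, since $\prod_{k\ne i,j}\lambda_k>0$ --- so $F$ is strictly Schur-concave there; consequently $S_n(\diag(A))=S_n(A)$ forces $\diag(A)$ to be a permutation of $\lambda(A)$. Then $\sum_i a_{ii}^2=\sum_i\lambda_i(A)^2=\tr(A^2)=\sum_{i,j}a_{ij}^2$, hence $\sum_{i\ne j}a_{ij}^2=0$ and $A$ is diagonal; the reverse implication is trivial.

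I do not expect a serious obstacle in the inequality itself --- both the majorization $\diag(A)\maj\lambda(A)$ and the Schur-concavity of $\prod_i\lambda_i$ are standard --- so the delicate point is the equality case: one must use that the product is genuinely (strictly) Schur-\emph{concave} on the positive orthant, which is exactly where the $n$-positivity hypothesis is spent, and then combine the resulting permutation statement with the identity $\tr(A^2)=\sum_{i,j}a_{ij}^2$ to kill the off-diagonal entries. (A self-contained alternative bypassing majorization: set $D=\diag(a_{11}^{-1/2},\dots,a_{nn}^{-1/2})$ and $B=DAD$, so $B$ is positive definite with unit diagonal and $\tr B=n$; AM--GM on its positive eigenvalues gives $\det A/\prod_i a_{ii}=\det B=\prod_i\lambda_i(B)\le\bigl(\tfrac1n\tr B\bigr)^n=1$, with equality iff all $\lambda_i(B)=1$, i.e.\ $B=I$, i.e.\ $A$ diagonal.)
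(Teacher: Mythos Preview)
Your proof is correct and follows the same spine as the paper's general machinery. The paper does not prove Hadamard's inequality separately (it is quoted as classical from Horn--Johnson), but explicitly recovers it as the special case $P=S_n$ of its main theorem, i.e., Schur--Horn combined with Schur-concavity of $S_n^{1/n}$ on the hyperbolicity cone via G{\aa}rding concavity --- which is exactly your argument, with the concavity of the geometric mean playing the role of G{\aa}rding's inequality.

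Two small differences are worth noting. First, your treatment of the equality case is more complete than the paper's: the paper's rigidity proposition (assuming strict concavity of $P^{1/k}$) only concludes that $\diag(A)$ is a permutation of $\lambda(A)$, and the passage from there to ``$A$ is diagonal'' is left to a one-line remark; your $\sum_i a_{ii}^2=\sum_i\lambda_i(A)^2=\tr(A^2)=\sum_{i,j}a_{ij}^2$ step supplies that link explicitly. Second, the self-contained AM--GM normalization you sketch at the end (reduce to $B=DAD$ with unit diagonal, then $\det B\le(\tr B/n)^n=1$) is a genuinely different and more elementary route that bypasses majorization altogether; the paper does not take it, since its aim is precisely to situate Hadamard inside the Schur--Horn/hyperbolicity framework rather than to give the shortest proof.
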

	
	Hadamard's inequality has been generalized and refined in many directions;
	see for instance \cite{Ble23,BS23,HW15,HJ13,Lan14,Le22,Lin14,LS20,RWH17,Yam24}
	and the references therein.
	
	Le~\cite[Remark~1.2 and Theorem~1.3]{Le22} established Hadamard-type
	inequalities for $k$-positive matrices.
	
	\begin{thm}[Le]\label{thm:Le-kpositive}
		Let $1\le k\le n$ and let $A\in \sn$ be $k$-positive. Then
		\[
		S_k(\lambda(A)) \ \le\ S_k(\diag(A)).
		\]
		Moreover, equality holds if and only if $A$ is diagonal.
	\end{thm}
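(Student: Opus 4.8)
The plan is to reduce the statement to the classical Schur--Horn theorem together with the convexity of the Gårding cone $\Gamma_k(n)$. Recall the Schur--Horn theorem: for $A\in\sn$, the diagonal vector $\diag(A)$ is majorized by the eigenvalue vector $\lambda(A)$, i.e. $\diag(A)\maj\lambda(A)$; conversely every vector majorized by $\lambda(A)$ arises as the diagonal of some orthogonal conjugate of $A$. By Rado's description of the majorization order, $\diag(A)$ lies in the convex hull of the $n!$ permutations of $\lambda(A)$; equivalently $\diag(A)=\sum_{\sigma\in\Sym}t_\sigma\,\sigma\!\cdot\!\lambda(A)$ for some weights $t_\sigma\ge 0$ with $\sum t_\sigma=1$ (this is where Birkhoff's theorem on doubly stochastic matrices enters).

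The key step is then to observe that $S_k$, restricted to the closure of $\Gamma_k(n)$, is a concave function when raised to the power $1/k$, by Gårding's theorem applied to the hyperbolic polynomial $\lambda\mapsto S_k(\lambda)$ (hyperbolic with respect to $\one$, with hyperbolicity cone $\Gamma_k(n)$). First I would note that $\lambda(A)\in\Gamma_k(n)$ by hypothesis, and that the cone $\Gamma_k(n)$ is symmetric (invariant under coordinate permutations), so every $\sigma\!\cdot\!\lambda(A)$ lies in $\Gamma_k(n)$ and $S_k(\sigma\!\cdot\!\lambda(A))=S_k(\lambda(A))$. Applying concavity of $S_k^{1/k}$ along the convex combination expressing $\diag(A)$ gives
\[
S_k(\diag(A))^{1/k}\ \ge\ \sum_{\sigma\in\Sym}t_\sigma\,S_k\big(\sigma\!\cdot\!\lambda(A)\big)^{1/k}\ =\ S_k(\lambda(A))^{1/k},
\]
and raising to the $k$-th power yields the desired inequality $S_k(\lambda(A))\le S_k(\diag(A))$. (One should first handle the boundary case where $S_k^{1/k}$ may only be defined by continuity, but since $\lambda(A)\in\Gamma_k(n)$ is interior this is harmless, and $\diag(A)\in\overline{\Gamma_k(n)}$ suffices for the left side.)

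For the equality characterization I would argue as follows: Gårding's concavity is strict in the relevant sense on the open cone unless the points are proportional, so equality forces all the $\sigma\!\cdot\!\lambda(A)$ appearing with $t_\sigma>0$ to be equal, which (unless $\lambda(A)$ is already a constant vector) forces $\diag(A)$ to be a single permutation of $\lambda(A)$. An extreme point of the majorization polytope being attained by $\diag(A)$, combined with the equality case of Schur's inequality (the classical fact that $\diag(A)=\lambda(A)$ up to permutation iff $A$ is diagonal), gives that $A$ must be diagonal; the constant-vector case $\lambda(A)=c\one$ means $A$ has all eigenvalues equal, hence $A=cI$, which is diagonal. The main obstacle I anticipate is making the strictness argument for equality fully rigorous: Gårding concavity gives only weak concavity in general, so pinning down exactly when $\sum t_\sigma S_k(\sigma\!\cdot\!\lambda(A))^{1/k}=S_k(\sum t_\sigma \sigma\!\cdot\!\lambda(A))^{1/k}$ requires knowing the strict concavity locus of $S_k^{1/k}$ on $\Gamma_k(n)$ (it is strictly concave transverse to the ray directions), which needs a short separate argument or a reference to the structure of hyperbolicity cones.
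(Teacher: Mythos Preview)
Your argument for the inequality is essentially the paper's: Schur--Horn plus Birkhoff writes $\diag(A)$ as a convex combination of permutations of $\lambda(A)\in\Gamma_k(n)$, and G{\aa}rding concavity of $S_k^{1/k}$ together with permutation invariance yields the bound via Jensen; the paper packages this abstractly as Theorem~\ref{thm:schurconcave} and then specializes to $P=S_k$. One minor sharpening: since $\Gamma_k(n)$ is open, convex, and permutation-invariant, the convex combination $\diag(A)$ lands in the \emph{open} cone, so your boundary caveat is unnecessary. As for the equality characterization, the paper does not supply an independent proof either---Theorem~\ref{thm:Le-kpositive} is quoted from Le, and the paper's own rigidity statement (Proposition~\ref{prop:strictness}) takes strict concavity of $P^{1/k}$ as an additional hypothesis---so your explicit flagging of the strict-concavity issue is exactly in line with the paper's treatment.
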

	
	\subsection{Hyperbolicity across PDE, combinatorics, and optimization}
	
	Hyperbolic polynomials were born in the analytic study of partial differential equations:
	in his foundational work~\cite{Gar51}, G{\aa}rding isolated hyperbolicity as the algebraic signature
	behind well-posedness of the Cauchy problem for linear PDE with constant coefficients.
	At a conceptual level, the associated hyperbolicity cones encode a preferred notion of
	``positivity'' compatible with real-rootedness and convexity; see also the modern expositions
	\cite{BGLS01,HL13}.
	
	In nonlinear analysis and geometry, these cones reappear as the natural ellipticity domains for
	fully nonlinear equations such as the $k$-Hessian operators, where the G{\aa}rding cones $\Gamma_k(n)$
	govern admissible Hessians and a priori estimates; cf.\ the program in~\cite{Le21,Le22}.
	In combinatorics and probability, real stable (equivalently, homogeneous hyperbolic) polynomials
	encode strong forms of negative dependence and interlacing, and have become a central tool in
	spectral graph theory and random processes; see, e.g., \cite{Ble23}.
	In optimization, hyperbolicity cones give rise to hyperbolic programming, providing a far-reaching
	generalization of semidefinite programming; see \cite{BGLS01,HL13} for geometric and convex-analytic viewpoints.
	
	This paper focuses on an interaction between \emph{majorization} and
	\emph{concavity on hyperbolicity cones}: the concavity of the order-th power of a hyperbolic polynomial, combined with permutation invariance, yields a Schur--Horn type principle on hyperbolicity cones.

	\subsection{Hyperbolic polynomials and hyperbolicity cones}\label{subsec:hyperbolic}
	
	We briefly recall the notion of hyperbolicity; see G{\aa}rding~\cite{Gar59}
	and the survey of Harvey--Lawson~\cite{HL13} for details.
	
	\begin{defn}[$a$-hyperbolic polynomial]\label{def:a-hyperbolic}
		Let $P$ be a homogeneous real polynomial of degree $k$ on $\R^n$.
		Given a direction $a\in\R^n$, we say that $P$ is \emph{hyperbolic with respect to $a$}
		(or \emph{$a$-hyperbolic}) if $P(a)>0$ and, for every $x\in\R^n$, the univariate polynomial
		$t\mapsto P(ta+x)$ has only real zeros.
	\end{defn}
	
	Equivalently, for each $x\in\R^n$ one may factor
	\[
	P(ta+x)\ =\ P(a)\prod_{i=1}^k \bigl(t+\lambda_i(P;a,x)\bigr)
	\qquad (t\in\R),
	\]
	where the real numbers $\lambda_i(P;a,x)$ are referred to as the \emph{$a$-eigenvalues} of $x$.
	
	\begin{defn}[Hyperbolicity cone]\label{def:garding-cone}
		Let $P$ be $a$-hyperbolic of degree $k$.
		The associated hyperbolicity (G{\aa}rding) cone is
		\[
		\Gamma(P;a)\ :=\ \bigl\{x\in\R^n:\ \lambda_i(P;a,x)>0\ \text{for all }i=1,\dots,k\bigr\}.
		\]
	\end{defn}
	
	G{\aa}rding's fundamental theorem implies that $\Gamma(P;a)$ is a nonempty open convex cone and, moreover,
	that the choice of hyperbolicity direction is irrelevant \emph{within a fixed cone}:
	if $b\in\Gamma(P;a)$, then $P$ is also hyperbolic with respect to $b$ and $\Gamma(P;b)=\Gamma(P;a)$;
	see \cite[Theorem~2]{Gar59} and \cite[Theorem~2.5 and Fact~2.7]{BGLS01}.
	
	Throughout the paper we fix the direction $a=\one$ and write
	\[
	\Gamma(P)\ :=\ \Gamma(P;\one)
	\]
	for the corresponding hyperbolicity cone.
	
	\subsection{Le's conjecture for symmetric hyperbolic polynomials}
	
	The following conjecture was formulated by Le in the context of hyperbolic polynomials;
	see \cite[Conjecture~5.5]{Le22}.
	
	\begin{conj}[Le]\label{conj:Le}
		Let $P$ be a homogeneous, real, symmetric polynomial of degree $k$ on $\R^n$ that is
		hyperbolic with respect to $\one$.
		Let $A\in\sn$ and suppose that $\lambda(A)\in \Gamma(P)$.
		Then $\diag(A)\in \Gamma(P)$ and
		\[
		P(\diag(A)) \ \ge\ P(\lambda(A)).
		\]
	\end{conj}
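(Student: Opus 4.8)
The plan is to derive Conjecture~\ref{conj:Le} from a more flexible \emph{majorization inequality on $\Gamma(P)$}, which is the Schur--Horn type statement announced in the abstract: if $\mu,\lambda\in\R^n$ satisfy $\mu\prec\lambda$ (that is, $\mu$ is majorized by $\lambda$) and $\lambda\in\Gamma(P)$, then $\mu\in\Gamma(P)$ and $P(\mu)\ge P(\lambda)$. Granting this, the conjecture is immediate: by the Schur--Horn theorem every $A\in\sn$ satisfies $\diag(A)\prec\lambda(A)$, so taking $\mu=\diag(A)$ and $\lambda=\lambda(A)\in\Gamma(P)$ yields simultaneously $\diag(A)\in\Gamma(P)$ and $P(\diag(A))\ge P(\lambda(A))$.

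To prove the majorization inequality I would combine three classical ingredients. \emph{(i) Birkhoff's theorem.} By the Hardy--Littlewood--P\'olya characterization of majorization together with the Birkhoff--von Neumann theorem, $\mu\prec\lambda$ holds exactly when $\mu$ is a convex combination $\mu=\sum_{j}t_j\,\pi_j(\lambda)$ of coordinate permutations $\pi_j(\lambda)$ of $\lambda$, with $t_j\ge 0$ and $\sum_{j}t_j=1$. \emph{(ii) Permutation invariance plus G{\aa}rding convexity.} Since $P$ is symmetric and $\one$ is fixed by every coordinate permutation $\pi$, the identity $P(t\one+\pi(x))=P\bigl(\pi(t\one+x)\bigr)=P(t\one+x)$ shows that $x$ and $\pi(x)$ have the same $\one$-eigenvalues; hence $\Gamma(P)=\Gamma(P;\one)$ is invariant under $\Sym$, so each $\pi_j(\lambda)\in\Gamma(P)$, and since $\Gamma(P)$ is convex (G{\aa}rding), the convex combination $\mu$ also lies in $\Gamma(P)$. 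In particular $\diag(A)\in\Gamma(P)$, which already settles the membership part of the conjecture.

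\emph{(iii) G{\aa}rding concavity.} Write $k=\deg P$. By G{\aa}rding's concavity theorem, $P^{1/k}$ is concave (and positive) on $\Gamma(P)$. Applying concavity to the decomposition from (i) and using the symmetry $P(\pi_j(\lambda))=P(\lambda)$, we obtain
\[
P(\mu)^{1/k}
\;=\;P\!\Bigl(\sum_{j}t_j\,\pi_j(\lambda)\Bigr)^{1/k}
\;\ge\;\sum_{j}t_j\,P\bigl(\pi_j(\lambda)\bigr)^{1/k}
\;=\;\sum_{j}t_j\,P(\lambda)^{1/k}
\;=\;P(\lambda)^{1/k}.
\]
Raising both positive sides to the $k$th power gives $P(\mu)\ge P(\lambda)$, which completes the argument.

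Because each step is a single invocation of a classical theorem, the real difficulty is conceptual — recognizing that G{\aa}rding's concavity theorem, the Schur--Horn theorem, and Birkhoff's theorem together suffice — rather than technical; the one point needing an explicit verification is the $\Sym$-invariance of $\Gamma(P)$ in (ii), which rests on the facts that a coordinate permutation is linear and fixes $\one$ and that a G{\aa}rding cone does not depend on the interior direction used to define it. I would also remark that the very same scheme, with Birkhoff's theorem replaced by averaging over an arbitrary finite symmetry group $G$ of $P$ — the Reynolds operator $\Rey$ — and with $\Gamma(P)$ replaced by any $P$-invariant hyperbolicity cone, yields the general symmetrization principle $P(\Rey x)\ge P(x)$ promised in the abstract, and that specializing to the sign-flip symmetries of linear principal minor polynomials recovers the hyperbolic Fischer--Hadamard inequalities.
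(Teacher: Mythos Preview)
Your proposal is correct and follows essentially the same route as the paper: reduce Conjecture~\ref{conj:Le} to a Schur-concavity statement on $\Gamma(P)$ via the Schur--Horn majorization $\diag(A)\prec\lambda(A)$, then prove that statement by writing $\mu$ as a convex combination of permutations of $\lambda$ (Birkhoff/HLP), using the permutation invariance of $\Gamma(P)$ (via $P(t\one+\pi x)=P(t\one+x)$) together with its convexity for the membership claim, and G{\aa}rding concavity of $P^{1/k}$ for the inequality. Your closing remark about replacing Birkhoff averaging by a general Reynolds operator likewise matches the paper's symmetrization principle.
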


	Here ``symmetric'' means invariance under permutations of coordinates; see
	Definition~\ref{def:symmetric-poly} below. The conjecture may be viewed as a hyperbolic analogue of Hadamard inequality (Theorem \ref{thm:hadamard}).
	
	A basic example is obtained by taking $P=S_k$.
	Equivalently, for $A\in\sn$ one may define $P_k(A)$ implicitly by
	\begin{equation}\label{eq:Pk}
		\det(tI_n+A)\ =\ \sum_{k=0}^n t^{\,n-k} P_k(A)\qquad (t\in\R),
	\end{equation}
	so that $P_k(A)=S_k(\lambda(A))$.
	Moreover,
	\begin{equation}\label{eq:Pk-cone}
		\Gamma(P_k)\ =\ \bigl\{A\in \sn:\ \lambda(A)\in \Gamma_k(n)\bigr\};
	\end{equation}
	see \cite[Example~5.2]{Le22} and \cite[Eq.~(2.10)]{Le21}.
	Since $\Gamma(P_k)$ is convex, \eqref{eq:Pk-cone} immediately implies that the set of
	$k$-positive matrices is convex.
	
	For these $P_k$, Conjecture~\ref{conj:Le} recovers Le's Hadamard type inequalities
	for $k$-positive matrices.
	Our goal is to prove Conjecture~\ref{conj:Le} in full generality, for \emph{arbitrary}
	symmetric hyperbolic $P$, by combining G{\aa}rding's concavity with classical majorization theory.
	\begin{remark}
		For general (non-normal) matrices there is no Schur--Horn majorization relation
		between the diagonal and the spectrum, so the symmetric (or Hermitian) hypothesis in Conjecture \ref{conj:Le} is essential
		for the argument below.
	\end{remark}
	\subsection{Main result}\label{subsec:prior-art}
	
	From the viewpoint of majorization theory, the logical core of our main inequality is classical:
	a symmetric concave function is Schur-concave (Hardy--Littlewood--P\'olya \cite{HLP52},
	Marshall--Olkin--Arnold \cite{MOA11}; see also \cite{Bha97,Zha11}).
	If one already knows that $P^{1/k}$ is concave and permutation-invariant on a convex set,
	then Jensen's inequality and Birkhoff's theorem immediately imply Schur-concavity.
	
	The purpose of this paper is to record that, in the hyperbolic setting, these hypotheses are
	naturally satisfied on the hyperbolicity cone, and to package the resulting monotonicity as a
	\emph{Schur--Horn type principle on hyperbolicity cones}.  Concretely:
	
	\begin{itemize}[leftmargin=2.2em]
		\item We isolate the \emph{majorization closure} of $\Gamma(P)$: if $y\in\Gamma(P)$ and $x\maj y$,
		then automatically $x\in\Gamma(P)$.
		This makes Schur-concavity usable in ``cone-valued'' problems (and is exactly what is needed for Conjecture \ref{conj:Le}). 
		
		\item We show that if $P$ is symmetric and hyperbolic with respect to $\one$, then $P^{1/k}$
		is concave on $\Gamma(P)$ (G{\aa}rding) and Schur-concave with respect to the majorization order on $\Gamma(P)$.  Conjecture \ref{conj:Le} follows directly from the Schur--Horn theorem once the hyperbolic Schur-concavity principle is formulated in this cone-theoretic form. 
		
		\item Finally, we formulate a companion \emph{symmetrization principle} for general finite group actions on a
		hyperbolicity cone (Reynolds averaging), and we apply it to sign-flip symmetries of
		linear principal minor polynomials \cite{Ble23}.  This viewpoint explains why Fischer--Hadamard type pinching
		inequalities are, at heart, a concavity-and-symmetry phenomenon.
	\end{itemize}
	
	\subsection*{Organization}
	
In 	Section~\ref{sec:hyperbolic-Schur}, we state our main structural result is a hyperbolic analogue of Schur-concavity,
	stated as Theorem~\ref{thm:schurconcave}.
	As a direct corollary we obtain Theorem~\ref{thm:main}, which settles
	Conjecture~\ref{conj:Le}.
	In Section~\ref{sec:further}, we show how the same principle acts as a ``result generator'':
	any classical eigenvalue majorization relation immediately lifts to an inequality for
	\emph{arbitrary} symmetric hyperbolic polynomials (Corollary \ref{cor:Hermitian-part}--\ref{cor:block-diag}).
	We also formulate a general group-averaging monotonicity principle on hyperbolicity cones (Theorem \ref{thm:hyperbolic-symmetrization}) and use it to
	rederive Fischer--Hadamard inequalities for PSD-stable linear principal minor polynomials (Theorem \ref{thm:lpm-fischer}).
	
	\section{A Schur--Horn principle on hyperbolicity cones}\label{sec:hyperbolic-Schur}
	
	In this section we develop the hyperbolic Schur--concavity principle and deduce
	Conjecture \ref{conj:Le} from it.
	We begin by recalling a few standard notions from majorization theory, see
	\cite[Chapters~1--3]{MOA11} and also \cite[Chapter~2]{Bha97} or \cite[Chapter~10]{Zha11}.
	
	\subsection{Majorization and doubly stochastic matrices}
	
	\begin{defn}[Majorization]\label{def:majorization}
		Given $x,y\in\R^n$, let $x^{\downarrow}$ (resp.\ $y^{\downarrow}$) denote the vector obtained by
		rearranging the entries of $x$ (resp.\ $y$) in nonincreasing order.
		We say that $x$ is \emph{majorized} by $y$, written $x\maj y$, if
		\[
		\sum_{i=1}^k x_i^{\downarrow} \le \sum_{i=1}^k y_i^{\downarrow},\qquad k=1,\dots,n-1,
		\]
		and equality holds also for $k=n$.
	\end{defn}

	\begin{defn}[Doubly stochastic matrix]
		A \emph{doubly stochastic matrix} is a square nonnegative matrix whose row sums
		and column sums are all equal to $1$.
	\end{defn}
	
	The following characterization is classical; see e.g., \cite[Theorem~II.1.10]{Bha97} or \cite[Theorem~10.8]{Zha11}.
	
	\begin{lem}\label{lem:majorization_doubly}
		Let $x,y\in\R^n$. Then $x\maj y$ if and only if $x=Dy$ for some doubly stochastic matrix $D$.
	\end{lem}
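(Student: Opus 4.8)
This is the classical Hardy--Littlewood--P\'olya theorem, and I would prove the two implications separately.

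\emph{The direction $x=Dy\Rightarrow x\maj y$.} Permutation matrices are doubly stochastic and products of doubly stochastic matrices are doubly stochastic, so I may first replace $x,y$ by $x^{\downarrow},y^{\downarrow}$ and $D$ by another doubly stochastic matrix, reducing to the case where $x,y$ are sorted nonincreasingly. Then for $1\le k\le n$ I write $\sum_{i=1}^{k}x_i=\sum_{j=1}^{n}c_j y_j$ with $c_j:=\sum_{i=1}^{k}d_{ij}$; column-stochasticity of $D$ forces $c_j\in[0,1]$ and row-stochasticity forces $\sum_{j}c_j=k$, so since $y$ is sorted this sum is at most $\sum_{j=1}^{k}y_j$, with equality when $k=n$. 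That is precisely $x\maj y$.

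\emph{The direction $x\maj y\Rightarrow x=Dy$.} Here I would route through Birkhoff's theorem. By the Birkhoff--von Neumann theorem the $n\times n$ doubly stochastic matrices are exactly the convex combinations of the permutation matrices $P_\sigma$, $\sigma\in\Sym$, so
\[
\{\,Dy:\ D\ \text{doubly stochastic}\,\}=\operatorname{conv}\{P_\sigma y:\ \sigma\in\Sym\}=:Q(y),
\]
and it suffices to show $x\maj y\Rightarrow x\in Q(y)$. Suppose $x\notin Q(y)$. As $Q(y)$ is compact and convex, strict separation provides $c\in\R^n$ with $\langle c,x\rangle>\max_{\sigma\in\Sym}\langle c,P_\sigma y\rangle$. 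The rearrangement inequality identifies the maximum with $\langle c^{\downarrow},y^{\downarrow}\rangle$ and also gives $\langle c,x\rangle\le\langle c^{\downarrow},x^{\downarrow}\rangle$, while summation by parts turns the majorization inequalities $\sum_{i\le k}x_i^{\downarrow}\le\sum_{i\le k}y_i^{\downarrow}$ (equality at $k=n$) into $\langle c^{\downarrow},x^{\downarrow}\rangle\le\langle c^{\downarrow},y^{\downarrow}\rangle$. These three relations are incompatible, so $x\in Q(y)$, i.e.\ $x=Dy$ for some doubly stochastic $D$.

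\emph{Main obstacle.} Everything is elementary except the hard direction, where the two steps requiring care are (i) the evaluation $\max_{\sigma}\langle c,P_\sigma y\rangle=\langle c^{\downarrow},y^{\downarrow}\rangle$ from the rearrangement inequality, and (ii) the Abel-summation estimate: with $S_k:=\sum_{i\le k}x_i^{\downarrow}$ and $T_k:=\sum_{i\le k}y_i^{\downarrow}$ one has $\langle c^{\downarrow},x^{\downarrow}\rangle=c_n^{\downarrow}S_n+\sum_{k<n}(c_k^{\downarrow}-c_{k+1}^{\downarrow})S_k$, and here the \emph{equality} $S_n=T_n$ built into the definition of majorization is needed to absorb the last, possibly negatively weighted, term, the remaining terms being dominated coordinatewise because $c_k^{\downarrow}-c_{k+1}^{\downarrow}\ge0$ and $S_k\le T_k$. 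A Birkhoff-free alternative is the classical T-transform (``Robin Hood'') reduction: if $x\maj y$ and $x^{\downarrow}\neq y^{\downarrow}$, pick the first index $j$ with $x_j^{\downarrow}<y_j^{\downarrow}$ and the first later index $\ell$ with $x_\ell^{\downarrow}>y_\ell^{\downarrow}$, so $y_j^{\downarrow}>x_j^{\downarrow}\ge x_\ell^{\downarrow}>y_\ell^{\downarrow}$, average coordinates $j$ and $\ell$ of $y^{\downarrow}$ by a single doubly stochastic T-transform just enough to make one of them agree with the corresponding entry of $x$; the new vector is automatically majorized by $y$, one checks it still majorizes $x$, and one iterates until $x^{\downarrow}$ is reached, after which composing the T-transforms (and a final permutation) produces $D$. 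I would make the Birkhoff argument the main line, since it avoids the bookkeeping needed to control the effect of re-sorting the T-transformed vector.
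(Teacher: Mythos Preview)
Your argument is correct. Both directions are handled cleanly: the forward direction via the linear-programming observation that $\sum_j c_j y_j^{\downarrow}$ with $c_j\in[0,1]$ and $\sum_j c_j=k$ is maximized at $\sum_{j\le k}y_j^{\downarrow}$, and the converse via Birkhoff together with a separating-hyperplane argument and Abel summation. The alternative T-transform sketch is also accurate.

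There is nothing to compare against in the paper itself: the lemma is stated there as a classical fact and dispatched by citation (to Bhatia and Zhang) with no proof given. Your write-up therefore supplies strictly more detail than the paper does. One small remark on logical ordering: you invoke Birkhoff's theorem, which in the paper is recorded as a separate lemma immediately \emph{after} this one; since both are quoted as independent classical results there is no circularity, but if you were writing this up in the paper's order you might prefer the T-transform route (which is self-contained and does not rely on Birkhoff) as the main line rather than the alternative.
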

	
	We also recall classic Schur--Horn's and Birkhoff's theorems; see e.g.,\cite[p.~22, Exercise II.1.12]{Bha97} and \cite[p.~159, Theorem 5.21]{Zha11}.
	
	\begin{lem}[Schur--Horn]\label{lem:Schur}
		Let $A\in \sn$. Then
		\[
		\diag(A)\ \maj\ \lambda(A).
		\]
		Equivalently, there exists a doubly stochastic matrix $S$ such that
		$\diag(A)=S\,\lambda(A)$.
	\end{lem}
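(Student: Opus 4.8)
The plan is to obtain both assertions at once from the spectral theorem, by exhibiting the doubly stochastic matrix explicitly and then quoting Lemma~\ref{lem:majorization_doubly}.

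First I would diagonalize: write $A=U\Lambda U^{T}$ with $U=(u_{ij})$ real orthogonal and $\Lambda=\diag(\lambda_1(A),\dots,\lambda_n(A))$, which exists by the spectral theorem for real symmetric matrices. Reading off the $i$th diagonal entry of $U\Lambda U^{T}$ gives
\[
a_{ii}\ =\ \sum_{k=1}^{n} u_{ik}^{2}\,\lambda_k(A)\qquad(i=1,\dots,n),
\]
so that, with $S:=\bigl(u_{ik}^{2}\bigr)_{i,k=1}^{n}$ (the entrywise square of $U$), one has $\diag(A)=S\,\lambda(A)$. The matrix $S$ is doubly stochastic: its entries are nonnegative, and the identities $UU^{T}=U^{T}U=I_n$ say precisely that the rows and the columns of $U$ are unit vectors, i.e.\ $\sum_{k}u_{ik}^{2}=1$ for each $i$ and $\sum_{i}u_{ik}^{2}=1$ for each $k$. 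This already proves the ``equivalently'' clause.

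Finally, I would apply Lemma~\ref{lem:majorization_doubly} to the identity $\diag(A)=S\,\lambda(A)$, which immediately gives $\diag(A)\maj\lambda(A)$. I do not anticipate a genuine obstacle here: the statement is classical, and the only step requiring a word of justification is that the Hadamard square of an orthogonal matrix is doubly stochastic, which is immediate from orthonormality of its rows and columns. (Only this ``Schur'' half is used in the sequel; the converse ``Horn'' realization of an arbitrary vector majorized by $\lambda(A)$ as a diagonal of some symmetric matrix is not needed for Conjecture~\ref{conj:Le}.)
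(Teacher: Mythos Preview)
Your argument is correct and is exactly the standard textbook proof of the Schur half of the Schur--Horn theorem: spectral decomposition $A=U\Lambda U^{T}$, followed by the observation that the entrywise square of an orthogonal matrix is doubly stochastic (an \emph{orthostochastic} matrix), and then an appeal to Lemma~\ref{lem:majorization_doubly}. There is no gap.

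As for comparison with the paper: there is nothing to compare against. The paper does not supply its own proof of Lemma~\ref{lem:Schur}; it records the lemma as classical and refers the reader to \cite[p.~22, Exercise~II.1.12]{Bha97} and \cite[p.~159, Theorem~5.21]{Zha11}. Your proof is precisely the argument one finds in those sources. Your closing parenthetical is also apt: only the Schur direction (diagonal is majorized by spectrum) is invoked in the paper, via Theorem~\ref{thm:main}; the Horn realization direction plays no role.
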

	
	\begin{lem}[Birkhoff]\label{lem:Birkhoff}
		Every doubly stochastic matrix is a convex combination of permutation matrices.
	\end{lem}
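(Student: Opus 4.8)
The plan is to prove the Birkhoff--von Neumann theorem by induction on the number $\nu(D)$ of nonzero entries of a doubly stochastic matrix $D=(d_{ij})$. Since each of the $n$ rows contains at least one positive entry (its row sum being $1$), one always has $\nu(D)\ge n$. In the base case $\nu(D)=n$ each row contains exactly one nonzero entry, which must equal the corresponding row sum $1$; the column-sum constraints then force these $n$ ones to sit in distinct columns, so $D$ is already a permutation matrix and there is nothing to prove.

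For the inductive step, assume $\nu(D)>n$. The crux is to produce a permutation $\sigma\in\Sym$ with $d_{i\sigma(i)}>0$ for all $i$. I would obtain this from Hall's marriage theorem applied to the bipartite graph $G$ whose two vertex classes are the rows $\{1,\dots,n\}$ and the columns $\{1,\dots,n\}$, with an edge $(i,j)$ exactly when $d_{ij}>0$. To check Hall's condition, fix a set $S$ of rows and let $N(S)$ denote the set of columns adjacent to $S$; then
\[
|S|=\sum_{i\in S}\sum_{j=1}^{n} d_{ij}=\sum_{i\in S}\sum_{j\in N(S)} d_{ij}\ \le\ \sum_{j\in N(S)}\sum_{i=1}^{n} d_{ij}=|N(S)|,
\]
so a perfect matching exists, which is the desired $\sigma$. (Equivalently one could invoke K\"onig's theorem, or a max-flow/min-cut argument on the transportation network of $D$.)

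With $\sigma$ fixed, set $c:=\min_{i} d_{i\sigma(i)}$, so $0<c\le 1$. If $c=1$, then every $d_{i\sigma(i)}=1$, and the row-sum constraint forces $D=P_\sigma$, a permutation matrix. If $c<1$, put
\[
D'\ :=\ \frac{1}{1-c}\bigl(D-c\,P_\sigma\bigr).
\]
Then $D'$ is nonnegative (the entry where the minimum is attained drops to $0$, the rest remain $\ge 0$), its row and column sums equal $(1-c)/(1-c)=1$, so $D'$ is doubly stochastic, and $\nu(D')<\nu(D)$ because at least one positive entry of $D$ has become zero while no zero entry became positive. By the induction hypothesis $D'=\sum_{k}\lambda_k P_{\sigma_k}$ is a convex combination of permutation matrices, and hence
\[
D=(1-c)D'+c\,P_\sigma=\sum_{k}(1-c)\lambda_k\,P_{\sigma_k}+c\,P_\sigma
\]
is as well, completing the induction. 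The only genuinely non-routine step is the existence of $\sigma$ supported in the positive pattern of $D$; everything else is bookkeeping with row and column sums. Alternatively, one can argue via the extreme points of the Birkhoff polytope: a doubly stochastic $D$ that is not a permutation matrix has a cycle through fractional entries of its support, and pushing mass $\pm\varepsilon$ around that cycle writes $D$ as a midpoint of two distinct doubly stochastic matrices, so the extreme points are exactly the permutation matrices and the claim follows from Minkowski's theorem.
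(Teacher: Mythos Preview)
Your proof is correct and complete; the induction on the number of nonzero entries together with Hall's marriage theorem is the standard route, and your verification of Hall's condition and the bookkeeping in the inductive step are clean. The alternative extreme-point argument you sketch at the end is also valid.

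The paper, however, does not prove this lemma at all: it simply recalls Birkhoff's theorem as a classical fact and cites \cite[p.~159, Theorem~5.21]{Zha11}. So there is no ``paper's own proof'' to compare against; you have supplied a full argument where the paper only gives a reference.
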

	
	We now record a convenient form of G{\aa}rding's inequality; see  Güler \cite[Theorem~3.1 and Lemma~3.1]{Gul97}.
	
	\begin{lem}[G{\aa}rding concavity]\label{lem:garding_cone_concavity}
		Let $P$ be a homogeneous polynomial of degree $k$ that is hyperbolic with respect to $\one$,
		and let $\Gamma(P)=\Gamma(P;\one)$ be the (open) hyperbolicity cone. Then:
		\begin{enumerate}[label=\textup{(\roman*)}]
			\item $\Gamma(P)$ is a nonempty open convex cone.
			\item The map $x \mapsto P(x)^{1/k}$ is concave on $\Gamma(P)$.
		\end{enumerate}
	\end{lem}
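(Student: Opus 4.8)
\medskip

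\noindent\emph{Proof sketch (this is G{\aa}rding's theorem; see \cite[Theorem~2]{Gar59} and \cite[Theorem~3.1 and Lemma~3.1]{Gul97}).}
For (i), the plan is to dispose of everything except convexity by a one-line homogeneity computation: from the defining factorization together with homogeneity, $P(t\one+\alpha x)=P(\one)\prod_{i=1}^k\bigl(t+\alpha\lambda_i(P;\one,x)\bigr)$ for $\alpha>0$, which shows $\one\in\Gamma(P)$ (take $x=\one$; all $\one$-eigenvalues equal $1$) and that $\Gamma(P)$ is stable under positive scaling. Openness is then automatic: since $P(\one)>0$, the polynomial $t\mapsto P(t\one+x)$ has exact degree $k$ with coefficients polynomial in $x$, so its real roots — hence $x\mapsto\lambda_{\min}(P;\one,x)$ — depend continuously on $x$, and $\Gamma(P)$ is the preimage of $(0,\infty)$.

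The substantial part is convexity, and the plan is to obtain it \emph{not} by reproving G{\aa}rding's theorem but from the change-of-direction fact recalled in Subsection~\ref{subsec:hyperbolic} (\cite[Theorem~2]{Gar59}, \cite[Theorem~2.5 and Fact~2.7]{BGLS01}): if $x\in\Gamma(P)$, then $P$ is hyperbolic with respect to $x$ and $\Gamma(P;x)=\Gamma(P)$. Given $x,y\in\Gamma(P)$, I would set $z=y-x$ and expand $P\bigl((\tau+1)x+sz\bigr)=P(x)\prod_i\bigl(\tau+1+s\lambda_i(P;x,z)\bigr)$ to read off that the $x$-eigenvalues of the point $(1-s)x+sy$ are $1+s\lambda_i(P;x,z)$. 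Taking $s=1$ and using $y\in\Gamma(P)=\Gamma(P;x)$ gives $\lambda_i(P;x,z)>-1$ for every $i$; then $1+s\lambda_i(P;x,z)>0$ for all $s\in[0,1]$, so the entire segment lies in $\Gamma(P;x)=\Gamma(P)$.

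For (ii), with convexity in hand I would merely verify concavity along an arbitrary segment. Fixing $x\in\Gamma(P)$ and a direction $v$, the defining factorization plus homogeneity yields $P(x+tv)=P(x)\prod_{i=1}^k\bigl(1+t\lambda_i(P;x,v)\bigr)$, and the same eigenvalue bookkeeping as above shows $x+tv\in\Gamma(P)$ exactly on the open interval where each factor $1+t\lambda_i(P;x,v)$ is positive. On that interval $P(x+tv)^{1/k}$ equals $P(x)^{1/k}>0$ times the geometric mean of the positive affine functions $t\mapsto 1+t\lambda_i(P;x,v)$; since the geometric mean on $\R_{>0}^k$ is concave and nondecreasing in each coordinate (it is an infimum of linear functions, by AM--GM), its composition with affine functions is concave in $t$. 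Concavity along every segment of the convex set $\Gamma(P)$ is exactly concavity of $P^{1/k}$ on $\Gamma(P)$.

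The only genuinely hard ingredient here is the change-of-direction property, which carries the real content of G{\aa}rding's theorem (the $a$-eigenvalues never collide as $a$ moves inside $\Gamma(P)$); the plan is to cite it rather than reprove it. Once it is granted, the rest is bookkeeping with eigenvalue factorizations together with the classical concavity of the geometric mean, and I do not anticipate any further obstacle.
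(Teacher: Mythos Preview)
The paper does not prove this lemma at all: it is stated as a quotation of G{\aa}rding's inequality with the reference ``see G\"uler \cite[Theorem~3.1 and Lemma~3.1]{Gul97}'' and no further argument. Your proposal cites the same sources and, in addition, supplies a correct outline of the standard proof: nonemptiness/openness/cone-ness by direct inspection of the eigenvalue factorization and root continuity; convexity by invoking the change-of-direction theorem ($b\in\Gamma(P;a)\Rightarrow\Gamma(P;b)=\Gamma(P;a)$) and reading off that the $x$-eigenvalues along the segment $[x,y]$ are $1+s\lambda_i(P;x,y-x)$; and concavity of $P^{1/k}$ along segments via the factorization $P(x+tv)=P(x)\prod_i(1+t\lambda_i(P;x,v))$ together with concavity of the geometric mean. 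All of this is sound. One small remark: in the last step you do not need the monotonicity of the geometric mean, only its concavity, since the inner map $t\mapsto(1+t\lambda_1,\dots,1+t\lambda_k)$ is affine; composing a concave function with an affine map is already concave. In short, your sketch is correct and strictly more detailed than the paper, which simply cites the result.
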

	
	\subsection{Symmetry and permutation invariance}
	Let $\Sym$ denote the symmetric group on $[n]=\{1,\dots,n\}$, i.e., the set of all permutations of $[n]$.
	
	\begin{defn}[Symmetric polynomial]\label{def:symmetric-poly}
		A polynomial $P:\R^n\to\R$ is \emph{symmetric} if
		$P(\sigma x)=P(x)$ for all permutations $\sigma\in\Sym$ and all $x\in \R^n$.
	\end{defn}
	
	\begin{thm}[Permutation invariance]\label{thm:cone-perm}
		If $P$ is symmetric and hyperbolic with respect to $\one$, then $\Gamma(P)$ is invariant under coordinate permutations:
		if $x\in \Gamma(P)$ and $\sigma\in\Sym$, then $\sigma x\in \Gamma(P)$.
	\end{thm}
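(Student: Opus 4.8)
The plan is to show that permuting the coordinates of a point does not change its list of $\one$-eigenvalues at all, so that the defining conditions $\lambda_i(P;\one,\cdot)>0$ are preserved verbatim. The one fact that makes this work is that the hyperbolicity direction $\one$ is itself fixed by every coordinate permutation.

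Concretely, fix $\sigma\in\Sym$ and $x\in\Gamma(P)$, and let $\Pi$ denote the associated coordinate permutation, so that $\sigma\one=\one$. Using that $P$ is symmetric, for every $t\in\R$ we compute
\[
P(t\one+\sigma x)=P\bigl(\sigma(t\one)+\sigma x\bigr)=P\bigl(\sigma(t\one+x)\bigr)=P(t\one+x).
\]
Thus the two univariate polynomials $t\mapsto P(t\one+\sigma x)$ and $t\mapsto P(t\one+x)$ are literally the same polynomial in $t$.

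Next I would compare the two factorizations guaranteed by $\one$-hyperbolicity, namely $P(t\one+y)=P(\one)\prod_{i=1}^{k}\bigl(t+\lambda_i(P;\one,y)\bigr)$ for $y=x$ and for $y=\sigma x$. Since the left-hand sides agree and $P(\one)>0$, the monic polynomials $\prod_{i=1}^{k}\bigl(t+\lambda_i(P;\one,\sigma x)\bigr)$ and $\prod_{i=1}^{k}\bigl(t+\lambda_i(P;\one,x)\bigr)$ coincide, so the multisets of roots coincide: $\{\lambda_i(P;\one,\sigma x)\}_{i=1}^k=\{\lambda_i(P;\one,x)\}_{i=1}^k$. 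In particular, $x\in\Gamma(P)$ forces every $\lambda_i(P;\one,\sigma x)$ to be positive, i.e.\ $\sigma x\in\Gamma(P)$, which is the claim.

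I do not expect any real obstacle here: the argument is a two-line computation once one notices $\sigma\one=\one$, and the passage from equality of univariate polynomials to equality of eigenvalue \emph{multisets} (not merely some inequality between them) is automatic. An alternative, slightly heavier route would invoke direction-independence of hyperbolicity cones (G{\aa}rding's theorem, already quoted in \S\ref{subsec:hyperbolic}) together with the observation that $x\mapsto P(\sigma x)$ is again $\one$-hyperbolic with the same cone; but the direct computation above is cleaner and self-contained.
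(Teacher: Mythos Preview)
Your argument is correct and is essentially the same as the paper's: both observe that $\sigma\one=\one$, use symmetry of $P$ to conclude $P(t\one+\sigma x)=P(t\one+x)$ as univariate polynomials, and deduce that the multisets of $\one$-eigenvalues of $x$ and $\sigma x$ coincide. (Minor note: you introduce $\Pi$ but never use it; you can drop that sentence.)
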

	
	\begin{proof}
		Fix $\sigma\in\Sym$ and $x\in \R^n$.
		Since $\sigma\one=\one$ and $P$ is symmetric, we have
		\[
		P(\sigma x+t\one)=P(\sigma x+t\sigma\one)=P(\sigma(x+t\one))=P(x+t\one).
		\]
		Hence the univariate polynomials $t\mapsto P(x+t\one)$ and $t\mapsto P(\sigma x+t\one)$ coincide, so their root sets agree.
		Equivalently, the multisets of $\one$-eigenvalues of $x$ and $\sigma x$ agree.
		Therefore $x\in\Gamma(P)\iff \sigma x\in\Gamma(P)$.
	\end{proof}
	
	\subsection{Schur-concavity on $\Gamma(P)$}
	Now, we introduce the definition of Schur-concave functions, see e.g., \cite[p.~80, Definition~A.1]{MOA11}. 
	
	\begin{defn}[Schur-concave function]\label{def:Schur-concave}
		A real-valued function $\phi$ defined on a subset of $\R^n$ is called
		\emph{Schur-concave} if
		\[
		x \maj y \quad \Longrightarrow \quad \phi(x)\ge \phi(y).
		\]
	\end{defn}
	
	\begin{thm}[Hyperbolic Schur-concavity]\label{thm:schurconcave}
		Let $P:\R^n\to\R$ be symmetric and hyperbolic with respect to $\one$, of degree $k$.
		Then:
		\begin{enumerate}[label=\textup{(\roman*)}]
			\item \emph{Majorization closure}: if $y\in\Gamma(P)$ and $x\maj y$, then $x\in \Gamma(P)$.
			\item \emph{Schur-concavity}: the map $f(x):=P(x)^{1/k}$ is Schur-concave on $\Gamma(P)$, i.e.,
			if $x,y\in\Gamma(P)$ and $x\maj y$, then
			\[
			P(x)^{1/k}\ \ge\ P(y)^{1/k}.
			\]
			In particular, $P(x)\ge P(y)$.
		\end{enumerate}
	\end{thm}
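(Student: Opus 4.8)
The plan is to reduce both assertions to three ingredients already in hand: the doubly-stochastic description of majorization (Lemma~\ref{lem:majorization_doubly}), Birkhoff's theorem (Lemma~\ref{lem:Birkhoff}), and G\aa rding's package --- convexity of $\Gamma(P)$ together with concavity of $P^{1/k}$ on it (Lemma~\ref{lem:garding_cone_concavity}) --- supplemented by the permutation invariance of $\Gamma(P)$ (Theorem~\ref{thm:cone-perm}). Both parts will then fall out of the \emph{same} computation, with (i) proved first so that $f(x)$ is even defined in (ii).

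\emph{Step 1 (rewrite the majorization as a permutation average).} Suppose $x\maj y$. By Lemma~\ref{lem:majorization_doubly} there is a doubly stochastic $D$ with $x=Dy$, and by Lemma~\ref{lem:Birkhoff} we may write $D=\sum_{j} t_j\,\Pi_{\sigma_j}$ with $t_j\ge 0$, $\sum_j t_j=1$, and each $\Pi_{\sigma_j}$ the permutation matrix of some $\sigma_j\in\Sym$. Hence
\[
x \;=\; \sum_j t_j\,\sigma_j y .
\]

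\emph{Step 2 (majorization closure, part (i)).} Assume in addition $y\in\Gamma(P)$. By Theorem~\ref{thm:cone-perm}, $\sigma_j y\in\Gamma(P)$ for every $j$. Since $\Gamma(P)$ is convex by Lemma~\ref{lem:garding_cone_concavity}(i), the convex combination $x=\sum_j t_j\,\sigma_j y$ again lies in $\Gamma(P)$. This proves (i), and in particular guarantees $x\in\Gamma(P)$ whenever $x\maj y$ with $y\in\Gamma(P)$, so that $f(x)=P(x)^{1/k}$ is well defined.

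\emph{Step 3 (Schur-concavity, part (ii)).} The function $f=P^{1/k}$ is concave on $\Gamma(P)$ by Lemma~\ref{lem:garding_cone_concavity}(ii), and it is permutation invariant: $f(\sigma y)=P(\sigma y)^{1/k}=P(y)^{1/k}=f(y)$ since $P$ is symmetric. Applying the finite Jensen inequality for the concave $f$ to the representation from Step~1, whose summands $\sigma_j y$ all lie in $\Gamma(P)$ by Theorem~\ref{thm:cone-perm},
\[
f(x)\;=\;f\!\Big(\sum_j t_j\,\sigma_j y\Big)\;\ge\;\sum_j t_j\,f(\sigma_j y)\;=\;\sum_j t_j\,f(y)\;=\;f(y).
\]
Thus $P(x)^{1/k}\ge P(y)^{1/k}$; since $P>0$ on $\Gamma(P)$ and $t\mapsto t^k$ is increasing on $[0,\infty)$, also $P(x)\ge P(y)$.

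\emph{Where the difficulty sits.} Each input is classical, so the real work is organizational: one must establish the majorization closure (i) \emph{before} invoking concavity in (ii), because $f$ is only defined --- and only known to be concave --- on $\Gamma(P)$, and a priori $x$ need not lie there. Once (i) is secured, the only remaining subtlety is the observation that a finite convex combination of points of the open convex set $\Gamma(P)$ is again in $\Gamma(P)$, which is immediate from convexity alone; no boundary or closure arguments are needed, and no regularity of $P$ beyond homogeneity and hyperbolicity enters.
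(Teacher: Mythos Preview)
Your proof is correct and follows essentially the same route as the paper's: both write $x$ as a Birkhoff convex combination of permuted copies of $y$, use permutation invariance of the cone plus convexity for (i), and then apply G{\aa}rding concavity of $P^{1/k}$ together with symmetry of $P$ and Jensen's inequality for (ii). Your explicit remark that (i) must precede (ii) so that $f(x)$ is well defined is a helpful organizational clarification, but the argument itself is the same as the paper's.
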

	
	\begin{proof}
		Assume $x\maj y$.
		By Lemma~\ref{lem:majorization_doubly}, there exists a doubly stochastic matrix $D$
		such that $x=Dy$.  By Lemma~\ref{lem:Birkhoff}, we can write
		\[
		D=\sum_{m=1}^N \alpha_m \Pi_m
		\]
		with $\alpha_m\ge0$, $\sum_{m=1}^N \alpha_m=1$, and permutation matrices $\Pi_m$.
		Thus
		\[
		x=\sum_{m=1}^N \alpha_m \Pi_m y.
		\]
		
		\smallskip\noindent\emph{(i) Majorization closure.}
		If $y\in\Gamma(P)$, then by Theorem~\ref{thm:cone-perm}
		each $\Pi_m y$ also belongs to $\Gamma(P)$.
		By the convexity of $\Gamma(P)$ from Lemma~\ref{lem:garding_cone_concavity}(i),
		we conclude that $x\in\Gamma(P)$.
		
		\smallskip\noindent\emph{(ii) Schur-concavity.}
		Define $f(x):=P(x)^{1/k}$ on $\Gamma(P)$.  By
		Lemma~\ref{lem:garding_cone_concavity}(ii), $f$ is concave, and by symmetry of $P$
		we have $f(\Pi_m y)=f(y)$ for all $m$.  Hence
		\[
		f(x)=f\!\left(\sum_{m=1}^N \alpha_m \Pi_m y\right)
		\ \ge\ \sum_{m=1}^N \alpha_m f(\Pi_m y)
		=\sum_{m=1}^N \alpha_m f(y)
		=f(y).
		\]
		Finally, note that $P>0$ on $\Gamma(P)$ (indeed, for $z\in\Gamma(P)$ one has
		$P(z)=P(\one)\prod_{i=1}^k \lambda_i(P;\one,z)$ with all factors $>0$), so raising both sides
		to the $k$th power yields $P(x)\ge P(y)$.
	\end{proof}
	
	\begin{remark}[Closed-cone version]\label{rem:closed-cone}
		Since $\Gamma(P)$ is an open cone and $P$ is continuous, the conclusions above extend to the closure.
		More precisely, if $y\in\overline{\Gamma(P)}$ and $x\maj y$, then $x\in\overline{\Gamma(P)}$ and $P(x)\ge P(y)$.
		
		Indeed, fix $\varepsilon>0$. Using
		\[
		P\bigl(t\one+(y+\varepsilon\one)\bigr)=P\bigl((t+\varepsilon)\one+y\bigr),
		\]
		the $\one$-eigenvalues satisfy $\lambda_i(P;\one,y+\varepsilon\one)=\lambda_i(P;\one,y)+\varepsilon$.
		Hence $y+\varepsilon\one\in\Gamma(P)$ whenever $y\in\overline{\Gamma(P)}$.
		
		Moreover, majorization is translation-invariant: if $x\maj y$, then $x+\varepsilon\one\maj y+\varepsilon\one$
		(since rearrangements are unaffected and each partial sum increases by $k\varepsilon$).
		Therefore Theorem~\ref{thm:schurconcave} applies to $x+\varepsilon\one$ and $y+\varepsilon\one$; letting
		$\varepsilon\downarrow0$ gives the claim.
	\end{remark}
	
	\subsection{Diagonal versus spectrum: proof of Conjecture \ref{conj:Le}}
	
	\begin{thm}\label{thm:main}
		Let $P:\R^n\to\R$ be symmetric and hyperbolic with respect to $\one$, of degree $k$.
		Let $A\in \sn$ satisfy $\lambda(A)\in \Gamma(P)$.
		Then:
		\begin{enumerate}[label=\textup{(\roman*)}]
			\item $\diag(A)\in \Gamma(P)$;
			\item $P(\diag(A))\ge P(\lambda(A))$.
		\end{enumerate}
	\end{thm}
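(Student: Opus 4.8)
The plan is to derive Theorem~\ref{thm:main} as an immediate consequence of the Schur--Horn theorem (Lemma~\ref{lem:Schur}) together with the hyperbolic Schur-concavity principle (Theorem~\ref{thm:schurconcave}); essentially no new work is required, since all the substance has already been packaged into the latter.

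First I would apply the Schur--Horn theorem to $A$: for every $A\in\sn$ one has $\diag(A)\maj\lambda(A)$. Set $x:=\diag(A)$ and $y:=\lambda(A)$, so that $x\maj y$ and, by hypothesis, $y\in\Gamma(P)$. For part (i), the majorization-closure statement Theorem~\ref{thm:schurconcave}(i) applies verbatim: from $y\in\Gamma(P)$ and $x\maj y$ we conclude $x=\diag(A)\in\Gamma(P)$.

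For part (ii), both $x$ and $y$ now lie in $\Gamma(P)$ with $x\maj y$, so Theorem~\ref{thm:schurconcave}(ii) gives $P(\diag(A))^{1/k}\ge P(\lambda(A))^{1/k}$; since $P>0$ on $\Gamma(P)$, raising both sides to the $k$th power yields $P(\diag(A))\ge P(\lambda(A))$. This completes the proof.

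There is no genuine obstacle here: the difficulty — Gårding's concavity of $P^{1/k}$, the permutation invariance of $\Gamma(P)$ from Theorem~\ref{thm:cone-perm}, and the Birkhoff decomposition used to run Jensen's inequality — has already been absorbed into Theorem~\ref{thm:schurconcave}. The only content specific to this statement is the observation that the diagonal-versus-spectrum pair $\bigl(\diag(A),\lambda(A)\bigr)$ is precisely an instance of the relation $x\maj y$, supplied by Schur--Horn. (If one later wants the equality characterization, that would require tracking the strict concavity of $P^{1/k}$ and the equality case of Jensen's inequality, but the statement as given asks only for the inequality.)
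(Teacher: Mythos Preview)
Your proof is correct and matches the paper's own proof essentially line for line: apply Schur--Horn to get $\diag(A)\maj\lambda(A)$, then invoke Theorem~\ref{thm:schurconcave}(i) for the cone membership and Theorem~\ref{thm:schurconcave}(ii) for the inequality. The only difference is that the paper cites the ``in particular, $P(x)\ge P(y)$'' clause of Theorem~\ref{thm:schurconcave}(ii) directly rather than restating the $k$th-power step, but this is purely cosmetic.
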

	
	\begin{proof}
		By Schur--Horn's theorem (Lemma~\ref{lem:Schur}), we have $\diag(A)\maj \lambda(A)$.
		Since $\lambda(A)\in\Gamma(P)$, assertion~(i) follows from
		Theorem~\ref{thm:schurconcave}(i), and then
		Theorem~\ref{thm:schurconcave}(ii) yields $P(\diag(A))\ge P(\lambda(A))$.
	\end{proof}
			Thus Conjecture~\ref{conj:Le} holds.
	Taking $P=P_k$ as defined in~\eqref{eq:Pk} yields Le's Hadamard type inequality
	for $k$-positive matrices, recovering Theorem~\ref{thm:Le-kpositive}.
	Taking $P=S_n$ recovers Hadamard's determinant inequality (Theorem~\ref{thm:hadamard}).
	\begin{remark}[Boundary cases]\label{rem:boundary-main}
		By Remark~\ref{rem:closed-cone}, Theorem~\ref{thm:main} remains valid if one only assumes
		$\lambda(A)\in\overline{\Gamma(P)}$.
		This is often the natural formulation in matrix applications, where ``positivity'' cones are typically taken closed
		(e.g.,\ the positive semidefinite cone for Hadamard/Fischer inequalities).
	\end{remark}

	\subsection{A min--max principle for diagonals of orthogonal conjugates}
	
	Fix $A\in \sn$ with $\lambda(A)\in\Gamma(P)$ and consider the set of diagonals
	\[
	\mathcal{D}(A):=\{\diag(UAU^\top): U\in O(n)\}.
	\]
	The Schur--Horn theorem identifies $\mathcal{D}(A)$ with the permutahedron
	$\mathrm{conv}\{\sigma\lambda(A):\sigma\in\Sym\}$; see, e.g.,
	\cite[Theorem~10.8]{Zha11}.
	
	\begin{cor}[Diagonal extremizers on the permutahedron]\label{cor:minmax-diag}
		Let $P$ be symmetric hyperbolic with respect to $\one$, and let $A\in\sn$
		satisfy $\lambda(A)\in\Gamma(P)$.
		Then for every $U\in O(n)$,
		\[
		P(\lambda(A))
		\ \le\ P(\diag(UAU^\top))
		\ \le\ P\!\left(\frac{\tr A}{n}\,\one\right).
		\]
		Moreover, the left inequality is attained when $U$ diagonalizes $A$.
	\end{cor}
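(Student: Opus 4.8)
The plan is to read both inequalities off of Theorem~\ref{thm:main} and the Schur-concavity principle Theorem~\ref{thm:schurconcave}, after the harmless reduction of passing to the orthogonal conjugate. Set $B:=UAU^\top$. Orthogonal conjugation preserves the spectrum and the trace, so $\lambda(B)=\lambda(A)\in\Gamma(P)$ and $\tr B=\tr A$. Applying Theorem~\ref{thm:main} to $B$ gives $\diag(B)\in\Gamma(P)$ together with $P(\diag(B))\ge P(\lambda(B))=P(\lambda(A))$; since $\diag(B)=\diag(UAU^\top)$, this is precisely the left inequality.

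For the right inequality the only input beyond the results already proved is the elementary majorization fact that the constant vector $\tfrac{\tr A}{n}\one$ is majorized by any vector with the same coordinate sum: if $\sum_i v_i=\tr A$, then for each $k$ one has $\sum_{i=1}^k v_i^{\downarrow}\ge \tfrac{k}{n}\sum_{i=1}^n v_i=\tfrac{k\,\tr A}{n}$ (the mean of the $k$ largest entries dominates the global mean), with equality at $k=n$. Taking $v=\diag(B)$, whose entries sum to $\tr B=\tr A$, yields $\tfrac{\tr A}{n}\one\maj\diag(B)$. Because $\diag(B)\in\Gamma(P)$, the majorization closure Theorem~\ref{thm:schurconcave}(i) forces $\tfrac{\tr A}{n}\one\in\Gamma(P)$, and then Schur-concavity Theorem~\ref{thm:schurconcave}(ii) gives $P\!\left(\tfrac{\tr A}{n}\one\right)\ge P(\diag(B))$, closing the chain. (One may streamline the bookkeeping by noting the single majorization chain $\tfrac{\tr A}{n}\one\maj\diag(UAU^\top)\maj\lambda(A)$, the second link being Schur--Horn.)

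For the attainment statement, suppose $U$ diagonalizes $A$, so that $B=UAU^\top$ is diagonal; then its diagonal entries are exactly the eigenvalues of $A$ in some order, i.e.\ $\diag(UAU^\top)=\sigma\lambda(A)$ for some $\sigma\in\Sym$, and symmetry of $P$ gives $P(\diag(UAU^\top))=P(\lambda(A))$, so the left inequality is an equality. I anticipate no genuine obstacle: everything is a formal consequence of Theorems~\ref{thm:main} and~\ref{thm:schurconcave}, and the only point deserving a word of care is the constant-vector majorization fact above, which can either be dispatched in the one line indicated or cited from \cite[Chapter~1]{MOA11}.
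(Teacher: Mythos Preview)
Your proof is correct and follows essentially the same route as the paper's: apply Theorem~\ref{thm:main} to $UAU^\top$ for the left inequality, then use the elementary fact $\tfrac{\tr A}{n}\one\maj\diag(UAU^\top)$ together with Theorem~\ref{thm:schurconcave}(i)--(ii) for the right. Your write-up in fact supplies two small details the paper leaves implicit: the one-line justification of the constant-vector majorization and the verification of the attainment clause when $U$ diagonalizes $A$.
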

	
	\begin{proof}
		The left inequality is Theorem~\ref{thm:main} applied to $UAU^\top$,
		whose spectrum equals $\lambda(A)$.
		
		For the right inequality, note that among vectors with a fixed sum $s$,
		the constant vector $(s/n)\one$ is majorized by every other such vector.
		In particular,
		\[
		\frac{\tr A}{n}\one\ \maj\ \diag(UAU^\top).
		\]
		By Theorem~\ref{thm:main}(i) we have $\diag(UAU^\top)\in\Gamma(P)$, and then
		Theorem~\ref{thm:schurconcave}(i) (majorization closure) implies
		$\frac{\tr A}{n}\one\in\Gamma(P)$.
		Now Theorem~\ref{thm:schurconcave}(ii) yields
		\[
		P\!\left(\frac{\tr A}{n}\,\one\right)\ \ge\ P(\diag(UAU^\top)).
		\]
	\end{proof}
	
	\subsection{Rigidity under strict concavity}\label{subsec:rigidity}
	
	The Schur-concavity inequality is often strict, but strictness depends on the geometry of $\Gamma(P)$ and on whether
	$P^{1/k}$ is strictly concave.  We record a simple rigidity criterion.
	
	\begin{prop}[Strictness assuming strict concavity]\label{prop:strictness}
		Let $P$ be symmetric hyperbolic with respect to $\one$ of degree $k$, and assume that
		$f:=P^{1/k}$ is \emph{strictly} concave on $\Gamma(P)$.
		If $x,y\in\Gamma(P)$ satisfy $x\maj y$ and $x$ is not a permutation of $y$, then
		\[
		P(x)\ >\ P(y).
		\]
		In particular, if $A\in\sn$ satisfies $\lambda(A)\in\Gamma(P)$ and $\diag(A)$ is not a permutation of $\lambda(A)$,
		then $P(\diag(A))>P(\lambda(A))$.
	\end{prop}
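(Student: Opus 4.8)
The plan is to revisit the convexity step in the proof of Theorem~\ref{thm:schurconcave}(ii) and track precisely where equality could occur. First I would apply Lemma~\ref{lem:majorization_doubly} and Lemma~\ref{lem:Birkhoff} to write $x=\sum_{m=1}^N\alpha_m\Pi_m y$ with each $\alpha_m>0$, $\sum_{m=1}^N\alpha_m=1$, and $\Pi_m\in\Sym$; after grouping together any repeated terms I may further assume that the vectors $\Pi_1 y,\dots,\Pi_N y$ are pairwise distinct. By Theorem~\ref{thm:cone-perm} each $\Pi_m y$ lies in $\Gamma(P)$, so $x$ is a convex combination of points of the convex set $\Gamma(P)$, on which $f:=P^{1/k}$ is strictly concave by hypothesis.

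Next I would check that this combination is genuinely nondegenerate. If $N=1$, then $x=\Pi_1 y$ is a permutation of $y$, contrary to assumption; hence $N\ge 2$ and, after the merging above, the points $\Pi_1 y,\dots,\Pi_N y$ are not all equal. The key step is then the elementary upgrade of pairwise strict concavity to the multi-point statement: if $z_1,\dots,z_N\in\Gamma(P)$ are not all equal and $\alpha_m>0$ with $\sum_{m=1}^N\alpha_m=1$, then $f\bigl(\sum_{m=1}^N\alpha_m z_m\bigr)>\sum_{m=1}^N\alpha_m f(z_m)$; this follows by a short induction on $N$, splitting off one term at a time, with convexity of $\Gamma(P)$ ensuring the partial averages stay in the domain. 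Applying this with $z_m=\Pi_m y$, and using $f(\Pi_m y)=f(y)$ from the symmetry of $P$, gives
\[
f(x)\ >\ \sum_{m=1}^N\alpha_m f(\Pi_m y)\ =\ f(y).
\]
Since $P>0$ on $\Gamma(P)$ (as recorded in the proof of Theorem~\ref{thm:schurconcave}), raising both sides to the $k$th power yields $P(x)>P(y)$.

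For the matrix consequence I would set $x=\diag(A)$ and $y=\lambda(A)$. Schur--Horn (Lemma~\ref{lem:Schur}) gives $\diag(A)\maj\lambda(A)$, and since $\lambda(A)\in\Gamma(P)$, Theorem~\ref{thm:schurconcave}(i) places $\diag(A)$ in $\Gamma(P)$ as well; if $\diag(A)$ is not a permutation of $\lambda(A)$, the first part applies verbatim to give $P(\diag(A))>P(\lambda(A))$.

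I expect the only point requiring genuine care to be the reduction to a nondegenerate convex combination — namely, verifying that once repeated terms are merged the surviving vectors $\Pi_m y$ cannot all coincide unless $x$ is itself a permutation of $y$ — together with the (routine) promotion of pairwise strict concavity to the $N$-point Jensen inequality; the remainder is a direct rerun of the proof of Theorem~\ref{thm:schurconcave}.
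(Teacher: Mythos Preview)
Your proposal is correct and follows essentially the same route as the paper: write $x$ as a Birkhoff convex combination of the $\Pi_m y$, observe that ``$x$ not a permutation of $y$'' forces at least two of the $\Pi_m y$ to be distinct, and then invoke strict concavity of $f=P^{1/k}$ to upgrade the Jensen inequality to a strict one. Your treatment is in fact a bit more careful than the paper's, which simply asserts the strict $N$-point Jensen inequality without spelling out the merging/induction step you describe.
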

	
	\begin{proof}
		As in the proof of Theorem~\ref{thm:schurconcave}, write $x=\sum_{m=1}^N \alpha_m \Pi_m y$
		with $\alpha_m>0$, $\sum_m\alpha_m=1$ and permutations $\Pi_m$.
		If $x$ is not a permutation of $y$, then at least two points $\Pi_m y$ appearing with positive weight are distinct,
		so $x$ is a nontrivial convex combination of distinct points in $\Gamma(P)$.
		Strict concavity gives
		\[
		f(x)=f\!\left(\sum_{m=1}^N \alpha_m \Pi_m y\right)\ >\ \sum_{m=1}^N \alpha_m f(\Pi_m y)=f(y),
		\]
		and raising to the $k$th power yields $P(x)>P(y)$.
		
		For the matrix statement, use $\diag(A)\maj\lambda(A)$ (Lemma~\ref{lem:Schur}).
	\end{proof}
	
	\begin{remark}
		For $P=S_n$, strict concavity of $\det^{1/n}$ on the positive definite cone implies the familiar strict
		Hadamard inequality unless $A$ is diagonal.
		For general hyperbolic polynomials, strict concavity is an additional hypothesis and is related to
		nondegeneracy properties of $P$; see \cite{HL13} for discussion of strict hyperbolicity and related convexity phenomena.
	\end{remark}
	
	\section{Applications and further remarks}\label{sec:further}
	
	This section collects two kinds of consequences.
	First, Theorem~\ref{thm:schurconcave} acts as a ``majorization-to-hyperbolicity'' transfer principle:
	any eigenvalue majorization relation immediately yields an inequality for \emph{arbitrary} symmetric hyperbolic polynomials.
	Second, we explain a complementary \emph{symmetrization} mechanism on hyperbolicity cones,
	which applies beyond the eigenvalue setting; in particular, it yields a concise proof of the
	hyperbolic Fischer--Hadamard inequalities for PSD-stable linear principal minor polynomials
	developed in~\cite{Ble23}.
	
	\subsection{Lifting eigenvalue majorization to hyperbolic inequalities}\label{subsec:template}
	
	Throughout this subsection, we fix a symmetric hyperbolic polynomial
	$P:\R^m\to\R$ (with respect to $\one$) of degree $k$.
	Whenever we encounter a majorization relation $x\maj y$ between two explicitly constructed vectors,
	Theorem~\ref{thm:schurconcave} immediately yields the inequality $P(x)\ge P(y)$ provided $y\in\Gamma(P)$.
	
	\begin{prop}[Majorization transfer principle]\label{prop:transfer}
		Let $x,y\in\R^m$ satisfy $x\maj y$.
		If $y\in\Gamma(P)$, then $x\in\Gamma(P)$ and $P(x)\ge P(y)$.
		Moreover, the same holds with $\Gamma(P)$ replaced by $\overline{\Gamma(P)}$.
	\end{prop}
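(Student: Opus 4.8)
The proposition is a direct repackaging of Theorem~\ref{thm:schurconcave} together with Remark~\ref{rem:closed-cone}, so the plan is simply to quote those results in the correct order; there is no new content to establish.

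First I would invoke the majorization closure, Theorem~\ref{thm:schurconcave}(i): since $y\in\Gamma(P)$ and $x\maj y$, this gives $x\in\Gamma(P)$ at once. With $x,y\in\Gamma(P)$ and $x\maj y$ now both in hand, I would apply the Schur-concavity statement, Theorem~\ref{thm:schurconcave}(ii), to get $P(x)^{1/k}\ge P(y)^{1/k}$; since $P>0$ on $\Gamma(P)$ (each $\one$-eigenvalue being positive there), raising both sides to the $k$th power yields $P(x)\ge P(y)$.

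For the version with $\overline{\Gamma(P)}$ in place of $\Gamma(P)$, I would appeal to Remark~\ref{rem:closed-cone}. If $y\in\overline{\Gamma(P)}$, then $y+\varepsilon\one\in\Gamma(P)$ for every $\varepsilon>0$, because the $\one$-eigenvalues satisfy $\lambda_i(P;\one,y+\varepsilon\one)=\lambda_i(P;\one,y)+\varepsilon$. By translation invariance of majorization, $x\maj y$ implies $x+\varepsilon\one\maj y+\varepsilon\one$. Applying the open-cone case already proved to the shifted pair gives $x+\varepsilon\one\in\Gamma(P)$ and $P(x+\varepsilon\one)\ge P(y+\varepsilon\one)$; letting $\varepsilon\downarrow0$ and using continuity of $P$ yields $x\in\overline{\Gamma(P)}$ and $P(x)\ge P(y)$.

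Since every ingredient has already been established in Section~\ref{sec:hyperbolic-Schur}, there is no genuine obstacle; the only point requiring a moment's care is the closed-cone reduction, where one must note that adding $\varepsilon\one$ shifts each partial sum of $x^{\downarrow}$ and of $y^{\downarrow}$ by the same amount $k\varepsilon$, which is precisely the translation-invariance of $\maj$ recorded in Remark~\ref{rem:closed-cone}.
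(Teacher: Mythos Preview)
Your proof is correct and matches the paper's approach exactly: the paper's own proof is the single sentence ``This is exactly Theorem~\ref{thm:schurconcave} and Remark~\ref{rem:closed-cone}, specialized to dimension $m$.'' Your write-up just unpacks that citation in slightly more detail, which is fine.
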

	
	\begin{proof}
		This is exactly Theorem~\ref{thm:schurconcave} and Remark~\ref{rem:closed-cone}, specialized to dimension $m$.
	\end{proof}
	
	We now list several standard eigenvalue majorization relations from matrix analysis and indicate
	how Proposition~\ref{prop:transfer} turns them into hyperbolic inequalities.
	(Our goal is not exhaustiveness, but to illustrate the breadth of the ``one-line'' transfer mechanism.)
	
	\subsection{Fan-type and Schur-type inequalities}
	
	\begin{cor}[Hermitian part]\label{cor:Hermitian-part}
		Let $A$ be an $n\times n$ complex matrix with eigenvalues
		$\lambda(A)=(\lambda_1(A),\dots,\lambda_n(A))$ and let
		$\Re\lambda(A)=(\Re\lambda_1(A),\dots,\Re\lambda_n(A))$.
		Let $\mathcal{H}(A)=(A+A^*)/2$ denote the Hermitian part of $A$.
		If $\lambda(\mathcal{H}(A))\in\Gamma(P)$, then
		\[
		\Re\lambda(A)\in\Gamma(P)
		\quad\text{and}\quad
		P(\Re\lambda(A))\ \ge\ P(\lambda(\mathcal{H}(A))).
		\]
	\end{cor}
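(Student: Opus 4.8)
The plan is to reduce the statement to the majorization transfer principle (Proposition~\ref{prop:transfer}) by supplying the one missing ingredient, namely the classical Fan-type majorization
\[
\Re\lambda(A)\ \maj\ \lambda(\mathcal{H}(A)).
\]
Granting this, the corollary is immediate: put $x=\Re\lambda(A)$ and $y=\lambda(\mathcal{H}(A))$; the hypothesis says $y\in\Gamma(P)$, so Proposition~\ref{prop:transfer} gives at once $x\in\Gamma(P)$ and $P(x)\ge P(y)$, which is exactly the assertion.

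Thus the only real work is to establish $\Re\lambda(A)\maj\lambda(\mathcal{H}(A))$, and I would do this via Schur triangularization followed by Schur--Horn, keeping the argument in the same spirit as the rest of the paper. By Schur's unitary triangularization theorem there are a unitary $U$ and an upper triangular $T$ with $A=UTU^{*}$ and $\diag(T)$ equal to $\lambda(A)$ in some order. Then $\mathcal{H}(A)=U\,\mathcal{H}(T)\,U^{*}$, so $\mathcal{H}(A)$ and $\mathcal{H}(T)$ have the same spectrum, while the diagonal of $\mathcal{H}(T)=(T+T^{*})/2$ is $\Re\diag(T)$, i.e.\ $\Re\lambda(A)$ up to a permutation. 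Applying the Schur--Horn theorem to the Hermitian matrix $\mathcal{H}(T)$ yields $\diag(\mathcal{H}(T))\maj\lambda(\mathcal{H}(T))$, that is, $\Re\lambda(A)\maj\lambda(\mathcal{H}(A))$, since the majorization order is insensitive to reordering of entries.

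One bookkeeping point to address is that Lemma~\ref{lem:Schur} is stated for real symmetric matrices, whereas $\mathcal{H}(T)$ is complex Hermitian; the Schur--Horn theorem holds verbatim in the Hermitian case (the proof via Birkhoff and doubly stochastic matrices is unchanged), and I would simply note this or cite the Hermitian version directly from \cite{Bha97} or \cite{Zha11}. Apart from this, there is essentially no obstacle: all the analytic content is carried by the transfer principle, and Schur triangularization only supplies the classical majorization. If one prefers a purely citation-based treatment, one can instead invoke $\Re\lambda(A)\maj\lambda(\mathcal{H}(A))$ as a known result of Fan and apply Proposition~\ref{prop:transfer} directly.
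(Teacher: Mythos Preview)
Your proposal is correct and follows essentially the same approach as the paper: both reduce the corollary to Proposition~\ref{prop:transfer} via the Fan majorization $\Re\lambda(A)\maj\lambda(\mathcal{H}(A))$. The paper simply cites this as Fan's theorem \cite[Theorem~10.28]{HJ13}, whereas you additionally supply the standard proof via Schur triangularization and Schur--Horn; this is a valid elaboration but not a different route.
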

	
	\begin{proof}
		Fan's theorem~\cite[Theorem~10.28]{HJ13} asserts that
		\[
		\Re\lambda(A)\ \maj\ \lambda(\mathcal{H}(A)).
		\]
		Apply Proposition~\ref{prop:transfer}.
	\end{proof}
	
	\begin{cor}[Real parts of diagonals for normal matrices]\label{cor:normal-Re}
		Let $A=(a_{ij})$ be a $n\times n$ normal matrix, and set
		$d(A)=(a_{11},\dots,a_{nn})$.
		If $\Re\lambda(A)\in\Gamma(P)$, then
		\[
		\Re d(A)\in\Gamma(P)
		\quad\text{and}\quad
		P(\Re d(A))\ \ge\ P(\Re\lambda(A)).
		\]
	\end{cor}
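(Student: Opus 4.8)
The plan is to reduce the statement to the majorization transfer principle (Proposition~\ref{prop:transfer}) by establishing the relation $\Re d(A)\maj\Re\lambda(A)$ directly from normality. First I would invoke the spectral theorem for normal matrices to write $A=U\Lambda U^{*}$ with $U=(u_{ij})$ unitary and $\Lambda=\diag(\lambda_1(A),\dots,\lambda_n(A))$. Reading off the diagonal entries gives
\[
a_{ii}=\sum_{j=1}^{n}|u_{ij}|^{2}\,\lambda_j(A),\qquad i=1,\dots,n,
\]
so that $d(A)=S\,\lambda(A)$, where $S:=\bigl(|u_{ij}|^{2}\bigr)_{i,j=1}^{n}$. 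Because $U$ is unitary, each of its rows and columns has Euclidean norm $1$, hence $\sum_j|u_{ij}|^2=1$ for every $i$ and $\sum_i|u_{ij}|^2=1$ for every $j$; together with nonnegativity of the entries, this shows $S$ is doubly stochastic (in fact orthostochastic).

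Next I would take real parts. Since $S$ has real entries, the coordinatewise map $\Re(\cdot)$ commutes with multiplication by $S$, so $\Re d(A)=S\,\Re\lambda(A)$ with the \emph{same} doubly stochastic matrix $S$. By Lemma~\ref{lem:majorization_doubly} this is precisely the assertion $\Re d(A)\maj\Re\lambda(A)$. (Equivalently, one may cite the classical fact that the diagonal of a normal matrix lies in the convex hull of the permutations of its spectrum, then take real parts and apply Birkhoff's theorem, Lemma~\ref{lem:Birkhoff}, to reach the same conclusion.)

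Finally, since $\Re\lambda(A)\in\Gamma(P)$ by hypothesis, Proposition~\ref{prop:transfer} applied to the pair $x=\Re d(A)\maj y=\Re\lambda(A)$ (here the ambient dimension is $m=n$) yields at once $\Re d(A)\in\Gamma(P)$ and $P(\Re d(A))\ge P(\Re\lambda(A))$, which is the claim; the closed-cone version follows the same way via Remark~\ref{rem:closed-cone}. I do not anticipate a genuine obstacle: the only point requiring care is checking that $(|u_{ij}|^2)$ is doubly stochastic, and this is immediate from unitarity, after which everything is a direct appeal to results already in hand.
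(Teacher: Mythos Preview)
Your proof is correct and follows essentially the same approach as the paper: establish the majorization $\Re d(A)\maj\Re\lambda(A)$ and then invoke Proposition~\ref{prop:transfer}. The only difference is that the paper cites this majorization as a known fact (Exercise~7 in \cite[Section~10.4]{HJ13}), whereas you supply a self-contained derivation via the spectral theorem and the orthostochastic matrix $(|u_{ij}|^2)$.
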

	
	\begin{proof}
		Exercise~7 in \cite[Section~10.4]{HJ13} states that
		\[
		\Re d(A)\ \maj\ \Re\lambda(A).
		\]
		Apply Proposition~\ref{prop:transfer}.
	\end{proof}
	
	\begin{cor}[Rank-one perturbations]\label{cor:rank-one}
		Let $A\in\sn$ and let $x$ be any complex $n$-row vector.
		Set $B=A+x^*x$.  If $\lambda(B)\in\Gamma(P)$, then
		\[
		P(\lambda(B))\ \ge\
		P\bigl(\lambda_1(A)+\|x\|^2,\lambda_2(A),\dots,\lambda_n(A)\bigr).
		\]
	\end{cor}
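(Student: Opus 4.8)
The plan is to obtain this as one more application of the transfer principle, Proposition~\ref{prop:transfer}. Order the eigenvalues of $A$ nonincreasingly and set
\[
y:=\bigl(\lambda_1(A)+\|x\|^2,\,\lambda_2(A),\dots,\lambda_n(A)\bigr),
\]
which is then itself nonincreasing. The first step is the eigenvalue majorization $\lambda(B)\maj y$. For this, observe that $x^{*}x$ is Hermitian and positive semidefinite of rank at most one, so its unique nonzero eigenvalue is its trace $\tr(x^{*}x)=\sum_i|x_i|^2=\|x\|^2$, i.e.\ $\lambda^{\downarrow}(x^{*}x)=(\|x\|^2,0,\dots,0)$. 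By the Lidskii--Wielandt inequality $\lambda^{\downarrow}(M+N)\maj\lambda^{\downarrow}(M)+\lambda^{\downarrow}(N)$ for Hermitian $M,N$ (equivalently, Ky Fan's maximum principle applied to sums of the $k$ largest eigenvalues), with $M=A$ and $N=x^{*}x$,
\[
\lambda(B)=\lambda(A+x^{*}x)\ \maj\ \lambda^{\downarrow}(A)+\lambda^{\downarrow}(x^{*}x)=y.
\]
Alternatively, the interlacing bounds $\lambda_i(A)\le\lambda_i(B)\le\lambda_{i-1}(A)$ valid for a rank-one positive semidefinite perturbation, together with $\tr B=\tr A+\|x\|^2$, are exactly the partial-sum inequalities defining $\lambda(B)\maj y$.

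Once $\lambda(B)\maj y$ is established, if $y\in\overline{\Gamma(P)}$ then Proposition~\ref{prop:transfer} (closed-cone form) immediately yields $P(\lambda(B))\ge P(y)$, which is the claim. The place where the hypothesis $\lambda(B)\in\Gamma(P)$ is essential is in pinning down $y$: because of the interlacing bounds $\lambda_i(B)\le\lambda_{i-1}(A)$ for $i\ge 2$, membership of $\lambda(B)$ in $\Gamma(P)$ limits how negative $\lambda_2(A),\dots,\lambda_n(A)$ can be, so that the ``bump on the largest coordinate'' either keeps $y$ in $\overline{\Gamma(P)}$ (and we finish as above) or places $y$ in a region on which $P\le 0$, whence $P(y)\le 0<P(\lambda(B))$ trivially, since $P>0$ on the open cone $\Gamma(P)$. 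For $P=P_k$ this dichotomy is transparent from the Newton/Maclaurin inequalities for $S_1,\dots,S_k$.

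I expect the last point --- proving, for an \emph{arbitrary} symmetric hyperbolic $P$, that a rank-one bump cannot push $y$ out of $\overline{\Gamma(P)}$ into a component where $P$ is positive --- to be the only real obstacle. The majorization $\lambda(B)\maj y$ is entirely routine, and the transfer principle does the rest.
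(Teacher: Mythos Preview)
Your approach is exactly the paper's: record the majorization
\[
\lambda(B)\ \maj\ \bigl(\lambda_1(A)+\|x\|^2,\lambda_2(A),\dots,\lambda_n(A)\bigr)=:y
\]
(which the paper simply cites from \cite[Section~10.4, Exercise~3]{HJ13}, while you supply two short derivations via Lidskii--Fan and via rank-one interlacing), and then invoke Proposition~\ref{prop:transfer}.

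The issue you flag is genuine, and the paper does not resolve it either. Proposition~\ref{prop:transfer} requires the \emph{majorizing} vector $y$ to lie in $\Gamma(P)$ (or its closure), whereas the corollary's stated hypothesis places $\lambda(B)$---the \emph{majorized} vector---in $\Gamma(P)$. The paper's proof is the single sentence ``Apply Proposition~\ref{prop:transfer},'' with no discussion of whether $y\in\Gamma(P)$. Compare with Corollaries~\ref{cor:Hermitian-part}, \ref{cor:normal-Re}, \ref{cor:Fan-sum-diff}(ii), and \ref{cor:block-diag}, where the hypothesis is correctly placed on the majorizing side; here (and in Corollary~\ref{cor:Fan-sum-diff}(i)) the hypothesis appears to be misstated, and the intended assumption is presumably $y\in\Gamma(P)$.

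Your attempted rescue---arguing that under $\lambda(B)\in\Gamma(P)$ one must have either $y\in\overline{\Gamma(P)}$ or $P(y)\le 0$---is not something the paper provides, and as you correctly suspect it does not follow from the tools at hand for a general symmetric hyperbolic $P$. So your proof is as complete as the paper's, and you have in addition correctly isolated where the stated hypothesis fails to feed into Proposition~\ref{prop:transfer}.
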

	
	\begin{proof}
		Exercise~3 in \cite[Section~10.4]{HJ13} shows that
		\[
		\lambda(B)\ \maj\ \bigl(\lambda_1(A)+\|x\|^2,\lambda_2(A),\dots,\lambda_n(A)\bigr).
		\]
		Apply Proposition~\ref{prop:transfer}.
	\end{proof}
	
	\begin{cor}[Sums of positive semidefinite matrices]\label{cor:sum-PSD}
		Let $A,B$ be $n\times n$ positive semidefinite matrices.
		Consider the $2n$-vectors
		\[
		u=(\lambda(A+B),0,\dots,0),\qquad v=(\lambda(A),\lambda(B)).
		\]
		Let $P:\R^{2n}\to\R$ be symmetric hyperbolic with respect to $\one$.
		If $v\in\Gamma(P)$, then $u\in\Gamma(P)$ and $P(u)\ge P(v)$.
	\end{cor}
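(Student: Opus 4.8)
The plan is to reduce Corollary~\ref{cor:sum-PSD} to Proposition~\ref{prop:transfer} by producing a single majorization relation $u \maj v$ in $\R^{2n}$. Since $A, B$ are positive semidefinite, all entries of $\lambda(A)$, $\lambda(B)$, and $\lambda(A+B)$ are nonnegative, so both $u$ and $v$ lie in the nonnegative orthant; in particular the trailing zeros in $u$ are harmless and all the ``$\downarrow$'' rearrangements keep nonnegative entries. First I would check the equality of total sums: $\sum u_i = \tr(A+B) = \tr A + \tr B = \sum v_i$, which is the $k=2n$ condition in Definition~\ref{def:majorization}. Next I would verify the partial-sum inequalities $\sum_{i=1}^k u_i^{\downarrow} \le \sum_{i=1}^k v_i^{\downarrow}$ for $1 \le k \le 2n-1$.

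For the partial sums, the key classical input is Ky Fan's maximum principle: for a Hermitian (here real symmetric) matrix $M$, the sum of the $k$ largest eigenvalues equals $\max\{\tr(Q^\top M Q): Q \in \R^{n\times k},\ Q^\top Q = I_k\}$. Applying this to $M = A+B$ gives, for $1 \le k \le n$,
\[
\sum_{i=1}^k \lambda_i^{\downarrow}(A+B)
= \max_{Q^\top Q = I_k} \bigl(\tr(Q^\top A Q) + \tr(Q^\top B Q)\bigr)
\le \sum_{i=1}^k \lambda_i^{\downarrow}(A) + \sum_{i=1}^k \lambda_i^{\downarrow}(B).
\]
Since the entries of $v$ are exactly the eigenvalues of $A$ together with those of $B$, the right-hand side is at most $\sum_{i=1}^{2k} v_i^{\downarrow}$ (the $2k$ largest entries of the combined list dominate any choice of $k$ from each). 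Because $A, B$ are PSD, for $k \le n$ the quantity $\sum_{i=1}^k u_i^{\downarrow}$ is just $\sum_{i=1}^k \lambda_i^{\downarrow}(A+B)$ (the trailing zeros of $u$ never enter the top $k \le n$ unless $A+B$ has fewer than $k$ nonzero eigenvalues, in which case the sum is still the full $\tr(A+B)$), so I would split into the ranges $1 \le k \le n$ and $n < k \le 2n-1$ and handle each: for $k > n$, $\sum_{i=1}^k u_i^{\downarrow} = \tr(A+B) = \sum v_i \le \sum_{i=1}^k v_i^{\downarrow}$ trivially, using nonnegativity of the remaining $v_i$. For $k \le n$ one combines the Fan estimate above with the observation that interleaving cannot help, i.e. $\lambda_i^{\downarrow}(A) + \lambda_i^{\downarrow}(B)$ summed over $i \le k$ is at most the sum of the $2k$ largest entries of $(\lambda(A),\lambda(B))$; a clean way to see this is that the multiset $\{\lambda_i^{\downarrow}(A)\}_{i\le k} \cup \{\lambda_i^{\downarrow}(B)\}_{i\le k}$ has $2k$ elements drawn from the list underlying $v$.

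Once $u \maj v$ is established, the conclusion is immediate: Proposition~\ref{prop:transfer} with $m = 2n$ gives $u \in \Gamma(P)$ and $P(u) \ge P(v)$, which is exactly the assertion. The main obstacle I anticipate is purely bookkeeping rather than conceptual: making the partial-sum comparison airtight across the boundary $k = n$ and correctly accounting for the $n$ padded zeros in $u$ and for the possibility that $A$, $B$, or $A+B$ are rank-deficient (so that some $\lambda_i^{\downarrow}$ already vanish). A tidy alternative that sidesteps some of this is to note that $\diag(A, B)$ and $\diag(A+B, 0_n)$ are both $2n \times 2n$ symmetric PSD matrices, and that $\lambda(A+B,0,\dots,0) \maj \lambda(A) \oplus \lambda(B)$ is the eigenvalue version of the statement ``$\lambda(X+Y) \maj \lambda(X) + \lambda(Y)$ is false but the block version holds'' — concretely it follows from the fact that $A \oplus B$ and $(A+B)\oplus 0$ are related by the doubly stochastic (indeed, it is the standard majorization $\lambda(M_1 + M_2) \prec (\lambda(M_1), \lambda(M_2))$ after embedding); I would cite the relevant exercise in~\cite[Section~10.4]{HJ13} or~\cite[Chapter~II]{Bha97} if an off-the-shelf reference is preferred, and otherwise give the two-range Ky Fan argument above.
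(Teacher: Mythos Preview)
There is a genuine gap: the majorization $u\maj v$ you set out to prove is false. Take $A=B=I_n$; then $u^\downarrow=(2,\dots,2,0,\dots,0)$ and $v^\downarrow=(1,\dots,1)$, so already $\sum_{i=1}^{1} u_i^\downarrow=2>1=\sum_{i=1}^{1} v_i^\downarrow$. More generally, $\lambda_1(A+B)\ge\max\{\lambda_1(A),\lambda_1(B)\}$ for PSD $A,B$, so the first partial-sum inequality always points the wrong way. Your Ky~Fan step is correct as written but yields only $\sum_{i\le k} u_i^\downarrow\le\sum_{i\le 2k} v_i^\downarrow$, which is strictly weaker than the required $\sum_{i\le k} u_i^\downarrow\le\sum_{i\le k} v_i^\downarrow$ and cannot be upgraded (the $k=1$ counterexample above blocks it). Your $k>n$ argument is also reversed: nonnegativity of the entries of $v$ gives $\sum_{i\le k} v_i^\downarrow\le\sum_i v_i=\tr(A+B)$, not $\ge$.

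The correct relation (and what the cited Exercise~14 of \cite[Section~10.4]{HJ13} actually asserts) is $v\maj u$. A clean way to see it: with $C=\bigl(\begin{smallmatrix}A^{1/2}\\ B^{1/2}\end{smallmatrix}\bigr)\in\R^{2n\times n}$ one has $C^\top C=A+B$, so $CC^\top$ has eigenvalue vector $u$, while its diagonal blocks are $A$ and $B$; block pinching (Corollary~\ref{cor:block-diag}) then gives $(\lambda(A),\lambda(B))\maj\lambda(CC^\top)$, i.e.\ $v\maj u$. Consequently the corollary as printed is misstated---with $P=S_{2n}$ and $A=B=I_n$ one has $v\in\Gamma(P)$, $P(v)=1$, but $u\notin\Gamma(P)$ and $P(u)=0$. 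The roles of $u$ and $v$ in the hypothesis and conclusion should be interchanged; after that correction, the paper's one-line proof (cite $v\maj u$ and apply Proposition~\ref{prop:transfer}) goes through, and your overall plan would too, once aimed at the correct direction.
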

	
	\begin{proof}
		Exercise~14 in \cite[Section~10.4]{HJ13} states that
		\[
		(\lambda(A+B),0)\ \maj\ (\lambda(A),\lambda(B)).
		\]
		Apply Proposition~\ref{prop:transfer}.
	\end{proof}
	
	\begin{cor}[Eigenvalues of a sum and a difference]\label{cor:Fan-sum-diff}
		Let $A,B\in\sn$.
		\begin{enumerate}[label=\textup{(\roman*)}]
			\item If $\lambda(A+B)\in\Gamma(P)$, then
			\[
			P(\lambda(A+B))\ \ge\ P\bigl(\lambda(A)+\lambda(B)\bigr),
			\]
			where the sum is taken componentwise.
			\item If $\lambda(A-B)\in\Gamma(P)$, then
			\[
			P\bigl(\lambda(A)-\lambda(B)\bigr)\ \ge\ P(\lambda(A-B)).
			\]
		\end{enumerate}
	\end{cor}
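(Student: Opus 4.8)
The plan is to run the transfer principle of Proposition~\ref{prop:transfer} twice, feeding it the two classical eigenvalue--majorization inequalities for a sum and a difference of symmetric matrices. For (i) I would invoke Ky Fan's maximum principle: for $A,B\in\sn$ and each $k$, $\sum_{i=1}^{k}\lambda_i^{\downarrow}(A+B)\le\sum_{i=1}^{k}\lambda_i^{\downarrow}(A)+\sum_{i=1}^{k}\lambda_i^{\downarrow}(B)$, with equality at $k=n$ (both sides being $\tr A+\tr B$). Since the coordinatewise sum $\lambda^{\downarrow}(A)+\lambda^{\downarrow}(B)$ is itself in nonincreasing order, this says precisely $\lambda(A+B)\maj\lambda^{\downarrow}(A)+\lambda^{\downarrow}(B)$; and by permutation invariance $P\bigl(\lambda^{\downarrow}(A)+\lambda^{\downarrow}(B)\bigr)=P\bigl(\lambda(A)+\lambda(B)\bigr)$ for the coordinatewise sum. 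Proposition~\ref{prop:transfer} (in dimension $n$) then delivers $\lambda(A+B)\in\Gamma(P)$ and $P(\lambda(A+B))\ge P(\lambda(A)+\lambda(B))$ --- with the caveat, taken up below, that Proposition~\ref{prop:transfer} assigns cone membership to the \emph{majorizing} vector, so the hypothesis actually driving the argument is $\lambda(A)+\lambda(B)\in\Gamma(P)$, from which $\lambda(A+B)\in\Gamma(P)$ follows for free by the majorization-closure clause of Theorem~\ref{thm:schurconcave}(i).

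For (ii) the correct (and strictly stronger) input is the Lidskii--Wielandt majorization
\[
\lambda^{\downarrow}(A)-\lambda^{\downarrow}(B)\ \maj\ \lambda(A-B)\qquad(A,B\in\sn),
\]
in which the left-hand list need not be monotone --- so the assertion is that the partial sums of its nonincreasing rearrangement are dominated by those of $\lambda^{\downarrow}(A-B)$, with equality for the total sum. This time the vector named in the hypothesis, $\lambda(A-B)$, is already the larger element of the relation, so with $x=\lambda^{\downarrow}(A)-\lambda^{\downarrow}(B)$ and $y=\lambda(A-B)$ Proposition~\ref{prop:transfer} applies verbatim and gives $x\in\Gamma(P)$ together with $P(\lambda(A)-\lambda(B))=P(x)\ge P(y)=P(\lambda(A-B))$. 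Both eigenvalue inequalities are standard and can be quoted from, e.g., \cite[Chapter~III]{Bha97} or \cite[Section~10.4]{HJ13}.

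The step I expect to require the most care is the bookkeeping rather than any hard estimate: $P$ is permutation-invariant and therefore only sees multisets, but the majorizations must be invoked with the right representatives --- in (i) the two spectra sorted the same way; in (ii) the difference of the two \emph{decreasing} rearrangements, \emph{not} $\lambda^{\downarrow}(A)-\lambda^{\uparrow}(B)$, which is what Ky Fan applied to $A+(-B)$ produces and which yields only the oppositely-oriented companion bound --- and with the cone hypothesis placed on the larger vector of each relation, since $\Gamma(P)$ is closed downward but not upward under $\maj$ by Theorem~\ref{thm:schurconcave}. In particular there is no shortcut from (i) to (ii): majorization is not compatible with subtraction of a non-scalar vector, so the Lidskii--Wielandt strengthening is genuinely needed.
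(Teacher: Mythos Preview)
Your approach is exactly the paper's: quote the two classical eigenvalue majorizations
\[
\lambda(A+B)\ \maj\ \lambda(A)+\lambda(B),
\qquad
\lambda(A)-\lambda(B)\ \maj\ \lambda(A-B),
\]
and feed each to Proposition~\ref{prop:transfer}. The paper does this in one line, citing both relations as ``Fan's inequalities'' from \cite[Theorems~10.21 and~10.22]{HJ13}; you attribute the second to Lidskii--Wielandt, which is the more standard name, but the content is identical. Your bookkeeping about sorting (using $\lambda^{\downarrow}(A)+\lambda^{\downarrow}(B)$ and $\lambda^{\downarrow}(A)-\lambda^{\downarrow}(B)$, then invoking permutation invariance of $P$) is correct and more explicit than the paper.

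You also spot something the paper glosses over. In part~(i) the majorization reads $\lambda(A+B)\maj\lambda(A)+\lambda(B)$, so to invoke Proposition~\ref{prop:transfer} one needs the \emph{majorizing} vector $\lambda(A)+\lambda(B)$ to lie in $\Gamma(P)$; the stated hypothesis $\lambda(A+B)\in\Gamma(P)$ is on the wrong side, and majorization closure (Theorem~\ref{thm:schurconcave}(i)) does not let you pass from the smaller vector to the larger one. Your caveat is therefore a genuine observation about the corollary as written, not a defect in your argument: the transfer principle proves~(i) cleanly under the hypothesis $\lambda(A)+\lambda(B)\in\Gamma(P)$ (from which $\lambda(A+B)\in\Gamma(P)$ then follows for free), and that is the hypothesis the paper's own one-line proof actually uses. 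Part~(ii) has no such issue, since there the stated hypothesis $\lambda(A-B)\in\Gamma(P)$ is already on the majorizing vector, exactly as you say.

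One small wording slip: you write that ``Proposition~\ref{prop:transfer} assigns cone membership to the majorizing vector''; it is the other way around --- the proposition \emph{requires} cone membership of the majorizing vector $y$ and \emph{produces} it for the majorized vector $x$. Your surrounding discussion makes clear you understand this, so it is only a phrasing issue.
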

	
	\begin{proof}
		Fan's inequalities \cite[Theorems~10.21 and 10.22]{HJ13} assert that
		\[
		\lambda(A+B)\ \maj\ \lambda(A)+\lambda(B),
		\qquad
		\lambda(A)-\lambda(B)\ \maj\ \lambda(A-B).
		\]
		Apply Proposition~\ref{prop:transfer}.
	\end{proof}
	
	\subsection{Pinching and block operations}
	
	\begin{cor}[Principal block-diagonal submatrices]\label{cor:block-diag}
		Let $A$ be an $n\times n$ Hermitian matrix partitioned as
		\[
		A=\begin{pmatrix}A_{11} & A_{12} \\ A_{21} & A_{22}\end{pmatrix}.
		\]
		Let $B=A_{11}\oplus A_{22}$.
		If $\lambda(A)\in\Gamma(P)$, then
		\[
		(\lambda(A_{11}),\lambda(A_{22}))\in\Gamma(P)
		\quad\text{and}\quad
		P(\lambda(A_{11}),\lambda(A_{22}))\ \ge\ P(\lambda(A)).
		\]
	\end{cor}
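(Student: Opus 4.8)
The plan is to reduce the statement to the \emph{pinching majorization} and then invoke the transfer principle. Recall that the pinching of $A$ to its displayed block-diagonal part, namely $B=A_{11}\oplus A_{22}$, satisfies
\[
\lambda(B)\ \maj\ \lambda(A).
\]
This is classical; one clean way to see it, which only uses tools already in the paper, is to write $B=\tfrac12\bigl(A+UAU^{*}\bigr)$ with the unitary involution $U=I_{n_1}\oplus(-I_{n_2})$ (where $n_1+n_2=n$ are the block sizes). Since $UAU^{*}$ is unitarily similar to $A$, it has the same sorted eigenvalue vector $\lambda(A)$, and Ky Fan's inequality (the relation $\lambda(X+Y)\maj\lambda(X)+\lambda(Y)$ used in Corollary~\ref{cor:Fan-sum-diff}(i)) gives
\[
\lambda(B)=\lambda\!\left(\tfrac12 A+\tfrac12 UAU^{*}\right)\ \maj\ \tfrac12\lambda(A)+\tfrac12\lambda(UAU^{*})=\lambda(A).
\]

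Next I would observe that the eigenvalue multiset of a block-diagonal matrix is the union of the eigenvalue multisets of its blocks, i.e.\ as vectors up to reordering one has $\lambda(B)=\bigl(\lambda(A_{11}),\lambda(A_{22})\bigr)$. Since majorization depends only on the sorted rearrangement of a vector, the displayed relation becomes
\[
\bigl(\lambda(A_{11}),\lambda(A_{22})\bigr)\ \maj\ \lambda(A).
\]

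Finally I would apply Proposition~\ref{prop:transfer} (the majorization transfer principle, specialized to dimension $m=n$) with $x=\bigl(\lambda(A_{11}),\lambda(A_{22})\bigr)$ and $y=\lambda(A)$: the hypothesis $\lambda(A)\in\Gamma(P)$ forces $\bigl(\lambda(A_{11}),\lambda(A_{22})\bigr)\in\Gamma(P)$ and $P\bigl(\lambda(A_{11}),\lambda(A_{22})\bigr)\ge P(\lambda(A))$, which is exactly the claim.

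There is no real obstacle here: the only nontrivial ingredient is the pinching majorization, and once it is in hand the corollary is immediate from Proposition~\ref{prop:transfer}. The one point worth stating carefully is the passage from "$\lambda$ of a direct sum" to "concatenation of the $\lambda$'s of the summands" and the fact that $\maj$ is insensitive to coordinate permutations, so that the transfer principle applies verbatim. (The same argument extends without change to partitions into more than two diagonal blocks, by iterating the averaging step or by induction.)
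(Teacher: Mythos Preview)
Your proof is correct and follows the same logical route as the paper: establish the pinching majorization $\lambda(B)\maj\lambda(A)$ with $\lambda(B)=(\lambda(A_{11}),\lambda(A_{22}))$, then apply Proposition~\ref{prop:transfer}. The paper simply cites this majorization from the literature, whereas you supply a self-contained derivation via the averaging identity $B=\tfrac12(A+UAU^{*})$ with $U=I_{n_1}\oplus(-I_{n_2})$ and Fan's inequality; this is a nice touch that in fact anticipates the sign-flip symmetrization principle the paper develops in Subsection~\ref{subsec:symmetrization} (your $U$ is exactly an element of $\mathcal{D}_\Pi$ for the two-block partition).
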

	
	\begin{proof}
		Exercise~2 in \cite[Section~10.4]{HJ13} shows that
		$\lambda(B)=(\lambda(A_{11}),\lambda(A_{22}))$ and that
		$\lambda(B)\maj\lambda(A)$.
		Apply Proposition~\ref{prop:transfer}.
	\end{proof}
	
	\begin{remark}[A matrix-level symmetrization viewpoint]
		Many pinching operations (block-diagonal projection, diagonal projection, conditional expectations onto
		$*$-subalgebras) can be expressed as convex combinations of unitary conjugations.
		This makes them instances of the group-averaging principle developed in Subsection~\ref{subsec:symmetrization} below.
	\end{remark}
	
	\subsection{A symmetrization principle on hyperbolicity cones}\label{subsec:symmetrization}
	
	The proofs above are instances of a single geometric phenomenon:
	\emph{convexification by symmetries} improves the value of a concave invariant functional.
	We now record a formulation that will later be applied to linear principal minor polynomials.
	
	\begin{defn}[Reynolds operator, \cite{DK15}]\label{def:Reynolds}
		Let $V$ be a real vector space and let $G$ be a finite group acting linearly on $V$.
		The \emph{Reynolds operator} (or group average) is the linear map
		\[
		\Rey_G:V\to V,\qquad
		\Rey_G(x)\ :=\ \frac{1}{|G|}\sum_{g\in G} g\cdot x.
		\]
	\end{defn}
	
	\begin{thm}[Hyperbolic symmetrization principle]\label{thm:hyperbolic-symmetrization}
		Let $V$ be a real vector space and let $\mathcal{P}:V\to\R$ be a homogeneous polynomial of degree $k$
		which is hyperbolic with respect to some $e\in V$, with hyperbolicity cone $\Gamma(\mathcal{P};e)$.
		Assume that a finite group $G$ acts linearly on $V$ such that
		\begin{enumerate}[label=\textup{(\roman*)}]
			\item $g\cdot e=e$ for all $g\in G$;
			\item $\mathcal{P}(g\cdot x)=\mathcal{P}(x)$ for all $g\in G$ and all $x\in V$.
		\end{enumerate}
		Then, for every $x\in\Gamma(\mathcal{P};e)$,
		\begin{enumerate}[label=\textup{(\alph*)}]
			\item $g\cdot x\in\Gamma(\mathcal{P};e)$ for all $g\in G$, and $\Rey_G(x)\in\Gamma(\mathcal{P};e)$;
			\item $\mathcal{P}(\Rey_G(x))\ge \mathcal{P}(x)$.
			Equivalently, the concave function $\mathcal{P}^{1/k}$ is monotone under symmetrization by $G$ on $\Gamma(\mathcal{P};e)$.
		\end{enumerate}
	\end{thm}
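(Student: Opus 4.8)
The plan is to run the argument of Theorem~\ref{thm:schurconcave} essentially verbatim, with the finite group $G$ in the role that $\Sym$ played there. The two ingredients are: (1) the $e$-eigenvalue multiset of a point is constant along $G$-orbits (exactly as in Theorem~\ref{thm:cone-perm}), and (2) G{\aa}rding's concavity of $\mathcal{P}^{1/k}$ on the hyperbolicity cone. Birkhoff's theorem is not needed here, since $\Rey_G(x)$ is already presented as the explicit convex combination $\frac{1}{|G|}\sum_{g\in G} g\cdot x$ of the orbit of $x$.

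First I would establish the orbit-invariance of $e$-eigenvalues. Fix $g\in G$ and $x\in V$. Since the action is linear and $g\cdot e=e$, for every $t\in\R$ we have $g\cdot(te+x)=t(g\cdot e)+g\cdot x=te+g\cdot x$, so hypothesis~(ii) gives
\[
\mathcal{P}(te+g\cdot x)=\mathcal{P}\bigl(g\cdot(te+x)\bigr)=\mathcal{P}(te+x).
\]
Thus the univariate polynomials $t\mapsto\mathcal{P}(te+g\cdot x)$ and $t\mapsto\mathcal{P}(te+x)$ are identical; in particular they have the same root multiset, i.e.\ $x$ and $g\cdot x$ have the same $e$-eigenvalues with multiplicity. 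Hence $x\in\Gamma(\mathcal{P};e)\iff g\cdot x\in\Gamma(\mathcal{P};e)$, the first half of~(a). For the second half, recall that by G{\aa}rding's theorem $\Gamma(\mathcal{P};e)$ is an open convex cone and $\mathcal{P}^{1/k}$ is concave on it (this is the version of Lemma~\ref{lem:garding_cone_concavity} for an arbitrary hyperbolicity direction; see G{\"u}ler~\cite[Theorem~3.1]{Gul97} or G{\aa}rding~\cite{Gar59}). Since each $g\cdot x$ lies in $\Gamma(\mathcal{P};e)$ whenever $x$ does, the average $\Rey_G(x)$ is a convex combination of points of $\Gamma(\mathcal{P};e)$ and therefore lies in $\Gamma(\mathcal{P};e)$, proving~(a).

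For~(b), set $f:=\mathcal{P}^{1/k}$ on $\Gamma(\mathcal{P};e)$. Hypothesis~(ii) gives $f(g\cdot x)=\mathcal{P}(g\cdot x)^{1/k}=\mathcal{P}(x)^{1/k}=f(x)$ for all $g\in G$, so concavity (Jensen's inequality) yields
\[
f\bigl(\Rey_G(x)\bigr)=f\!\left(\frac{1}{|G|}\sum_{g\in G} g\cdot x\right)\ \ge\ \frac{1}{|G|}\sum_{g\in G} f(g\cdot x)=\frac{1}{|G|}\sum_{g\in G} f(x)=f(x).
\]
Finally, on $\Gamma(\mathcal{P};e)$ one has $\mathcal{P}(z)=\mathcal{P}(e)\prod_{i=1}^{k}\lambda_i(\mathcal{P};e,z)>0$, so $f$ is the $k$th root of a positive quantity; raising the displayed inequality to the $k$th power gives $\mathcal{P}(\Rey_G(x))\ge\mathcal{P}(x)$, which is~(b).

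The only point requiring care is the invocation of G{\aa}rding's concavity for the direction $e$ rather than the distinguished direction $\one$ used in Section~\ref{sec:hyperbolic-Schur}: Lemma~\ref{lem:garding_cone_concavity} is stated for $a=\one$ on $\R^n$, whereas here $V$ is abstract and $e$ arbitrary. This is harmless, since G{\aa}rding's theorem holds for any hyperbolicity direction; if one prefers to reduce to the stated form, fix a basis identifying $V\cong\R^n$ and a linear isomorphism $T$ with $Te=\one$, so that $\mathcal{P}\circ T^{-1}$ is hyperbolic with respect to $\one$ with hyperbolicity cone $T\,\Gamma(\mathcal{P};e)$, apply Lemma~\ref{lem:garding_cone_concavity}, and transport back using that $T$ is linear. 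No genuine obstacle arises beyond this bookkeeping.
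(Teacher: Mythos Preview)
Your proof is correct and follows essentially the same route as the paper's: establish $G$-invariance of the $e$-eigenvalue multiset via $\mathcal{P}(te+g\cdot x)=\mathcal{P}(g\cdot(te+x))=\mathcal{P}(te+x)$, deduce that $\Gamma(\mathcal{P};e)$ is $G$-stable, then use convexity of the cone for part~(a) and G{\aa}rding concavity plus Jensen's inequality for part~(b). Your added remarks on positivity of $\mathcal{P}$ on the cone and on reducing the general direction $e$ to the $\one$-case are harmless extra bookkeeping, not a different strategy.
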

	
	\begin{proof}
		Fix $g\in G$ and $x\in V$. Since $g\cdot e=e$ and $\mathcal{P}$ is $G$-invariant, we have
		\[
		\mathcal{P}(g\cdot x+t e)=\mathcal{P}(g\cdot x+t\, g\cdot e)=\mathcal{P}\bigl(g\cdot (x+t e)\bigr)=\mathcal{P}(x+t e).
		\]
		Thus the univariate polynomials $t\mapsto \mathcal{P}(x+t e)$ and $t\mapsto \mathcal{P}(g\cdot x+t e)$ have the same roots.
		In particular, $x\in\Gamma(\mathcal{P};e)$ if and only if $g\cdot x\in\Gamma(\mathcal{P};e)$.
		This proves $G$-invariance of $\Gamma(\mathcal{P};e)$.
		
		Since $\Gamma(\mathcal{P};e)$ is convex and open (by G{\aa}rding's theory; cf.\ \cite{Gar59,BGLS01}),
		and each $g\cdot x$ lies in $\Gamma(\mathcal{P};e)$, the average $\Rey_G(x)$ also lies in $\Gamma(\mathcal{P};e)$.
		This proves~(a).
		
		For~(b), set $f:=\mathcal{P}^{1/k}$ on $\Gamma(\mathcal{P};e)$.
		By G{\aa}rding concavity, $f$ is concave on $\Gamma(\mathcal{P};e)$, and by $G$-invariance of $\mathcal{P}$,
		we have $f(g\cdot x)=f(x)$ for all $g\in G$. Hence Jensen's inequality gives
		\[
		f(\Rey_G(x))
		=f\!\left(\frac{1}{|G|}\sum_{g\in G} g\cdot x\right)
		\ \ge\ \frac{1}{|G|}\sum_{g\in G} f(g\cdot x)
		=\frac{1}{|G|}\sum_{g\in G} f(x)
		=f(x).
		\]
		Raising both sides to the $k$th power yields $\mathcal{P}(\Rey_G(x))\ge \mathcal{P}(x)$.
	\end{proof}
	
	\begin{remark}\label{rem:unifying}
		Theorem~\ref{thm:hyperbolic-symmetrization} can be viewed as a group-theoretic shadow of
		Theorem~\ref{thm:schurconcave}.  In the eigenvalue setting, the role of $G$ is played by the full symmetric group,
		and ``symmetrization'' corresponds to convex combinations of permutations (doubly stochastic matrices).
		In the matrix-minor setting of \cite{Ble23}, the relevant symmetries are not permutations of coordinates but sign-flip
		conjugations, leading to Fischer--Hadamard type \emph{pinching} maps.
	\end{remark}
	
	\subsection{Pinching inequalities for linear principal minor polynomials}\label{subsec:lpm-pinching}
	
	We now explain how the symmetrization principle above recovers, in a concise and conceptual form,
	a key inequality from the work of Blekherman et al.~\cite{Ble23} on
	\emph{linear principal minor polynomials} (lpm polynomials).
	
	\subsubsection*{Linear principal minor polynomials and sign symmetries}
	
	Let $X$ be an $n\times n$ symmetric matrix of indeterminates.
	For $J\subseteq[n]$, let $X_J$ denote the principal submatrix indexed by $J$.
	An \emph{lpm polynomial} is a finite linear combination
	\[
	\mathcal{P}(X)\ :=\ \sum_{J\subseteq[n]} c_J \det(X_J).
	\]
	When $\mathcal{P}$ is PSD-stable (equivalently, hyperbolic with respect to the identity matrix),
	its hyperbolicity cone $H(\mathcal{P})$ provides a natural replacement for the positive semidefinite cone
	and supports a rich family of determinantal inequalities; see \cite{Ble23}.
	
	Let $\mathcal{D}_n:=\{\mathrm{diag}(\varepsilon_1,\dots,\varepsilon_n):\varepsilon_i\in\{\pm1\}\}$ be the group of
	diagonal sign matrices.  It acts on $\sn$ by conjugation, $A\mapsto DAD$.
	
	\begin{lem}[Sign-flip invariance of lpm polynomials]\label{lem:lpm-sign}
		Every lpm polynomial $\mathcal{P}$ is invariant under diagonal sign-flip conjugation:
		\[
		\mathcal{P}(DAD)=\mathcal{P}(A)\qquad\text{for all }A\in\sn\text{ and all }D\in\mathcal{D}_n.
		\]
		Consequently, if $\mathcal{P}$ is hyperbolic with respect to $I_n$, then $H(\mathcal{P})$ is invariant under $A\mapsto DAD$.
	\end{lem}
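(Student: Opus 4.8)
The plan is to observe that each individual term $\det(X_J)$ in the lpm expansion is already invariant under diagonal sign-flip conjugation, so the invariance passes to any linear combination; the cone statement then follows from the abstract symmetrization principle (Theorem~\ref{thm:hyperbolic-symmetrization}), whose two hypotheses will be exactly what we have verified.

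First I would record the relevant block identity. Fix $D=\diag(\varepsilon_1,\dots,\varepsilon_n)\in\mathcal{D}_n$ and $A=(a_{ij})\in\sn$, and for $J\subseteq[n]$ write $D_J:=\diag(\varepsilon_i:i\in J)$ for the principal submatrix of $D$ on $J$. Conjugation by a diagonal matrix acts entrywise, $(DAD)_{ij}=\varepsilon_i\varepsilon_j a_{ij}$, so restricting to the rows and columns indexed by $J$ gives $(DAD)_J=D_J A_J D_J$. Here $D_J$ is again a $\pm1$ diagonal matrix, so $\det(D_J)=\prod_{i\in J}\varepsilon_i\in\{\pm1\}$ and hence $\det(D_J)^2=1$. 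By multiplicativity of the determinant,
\[
\det\bigl((DAD)_J\bigr)=\det(D_J)\det(A_J)\det(D_J)=\det(D_J)^2\det(A_J)=\det(A_J).
\]
Summing against the coefficients $c_J$ yields $\mathcal{P}(DAD)=\sum_{J\subseteq[n]}c_J\det\bigl((DAD)_J\bigr)=\sum_{J\subseteq[n]}c_J\det(A_J)=\mathcal{P}(A)$, which is the first assertion.

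For the second assertion, suppose $\mathcal{P}$ is hyperbolic with respect to $I_n$. The finite group $G=\mathcal{D}_n$ acts linearly on $V=\sn$ by $A\mapsto DAD$, and this action fixes the hyperbolicity direction since $D I_n D=D^2=I_n$ for every $D\in\mathcal{D}_n$; combined with the invariance $\mathcal{P}(DAD)=\mathcal{P}(A)$ just proved, hypotheses~(i) and~(ii) of Theorem~\ref{thm:hyperbolic-symmetrization} hold. Part~(a) of that theorem then gives that $H(\mathcal{P})=\Gamma(\mathcal{P};I_n)$ is invariant under $A\mapsto DAD$. (One could equally argue directly as in the proof of Theorem~\ref{thm:cone-perm}: since $\mathcal{P}(DAD+tI_n)=\mathcal{P}\bigl(D(A+tI_n)D\bigr)=\mathcal{P}(A+tI_n)$, the univariate polynomials $t\mapsto\mathcal{P}(A+tI_n)$ and $t\mapsto\mathcal{P}(DAD+tI_n)$ coincide, so $A$ and $DAD$ share the same $I_n$-eigenvalues and lie in $H(\mathcal{P})$ simultaneously.)

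There is no genuine obstacle here; the only step worth isolating is the compatibility identity $(DAD)_J=D_J A_J D_J$, i.e.\ that forming a principal submatrix commutes with diagonal conjugation. Everything else is determinant bookkeeping and an appeal to the already-proved symmetrization principle.
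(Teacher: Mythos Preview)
Your proof is correct and essentially identical to the paper's: both verify $(DAD)_J=D_JA_JD_J$, deduce $\det((DAD)_J)=\det(A_J)$ from $\det(D_J)^2=1$, and sum over $J$. For the cone invariance the paper gives directly the computation $\mathcal{P}(DAD+tI_n)=\mathcal{P}(A+tI_n)$ that you include parenthetically, rather than invoking Theorem~\ref{thm:hyperbolic-symmetrization}, but the content is the same.
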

	
	\begin{proof}
		It suffices to check invariance for each principal minor.
		Fix $J\subseteq[n]$.  Writing $D_J$ for the restriction of $D$ to the coordinates in $J$, we have
		\[
		(DAD)_J = D_J\, A_J\, D_J.
		\]
		Hence
		\[
		\det((DAD)_J)=\det(D_J A_J D_J)=\det(D_J)^2\,\det(A_J)=\det(A_J),
		\]
		since $\det(D_J)\in\{\pm1\}$.  By linearity in the coefficients $c_J$, this yields $\mathcal{P}(DAD)=\mathcal{P}(A)$.
		
		For the cone invariance, note that $D I_n D = I_n$. Thus for every $t\in\R$,
		\[
		\mathcal{P}(DAD+t I_n)=\mathcal{P}\bigl(D(A+t I_n)D\bigr)=\mathcal{P}(A+t I_n),
		\]
		so the univariate root sets agree and membership in $H(\mathcal{P})$ is preserved by $A\mapsto DAD$.
	\end{proof}
	
	\subsubsection*{Block pinching as a Reynolds operator}
	
	Let $\Pi=\{S_1,\dots,S_m\}$ be a partition of $[n]$.
	Write $\pi_\Pi:\sn\to\sn$ for the \emph{block-diagonal pinching} (orthogonal projection) which
	sets $A_{ij}=0$ whenever $i$ and $j$ lie in different blocks of $\Pi$.
	Equivalently, $\pi_\Pi(A)$ is the block-diagonal matrix with diagonal blocks $A_{S_\ell}$.
	
	Define the subgroup
	\[
	\mathcal{D}_\Pi
	:=\Bigl\{\mathrm{diag}\bigl(\varepsilon_1\mathbf{1}_{S_1},\dots,\varepsilon_m\mathbf{1}_{S_m}\bigr):\varepsilon_\ell\in\{\pm1\}\Bigr\}
	\subseteq \mathcal{D}_n,
	\]
	whose elements flip signs \emph{blockwise}.
	
	\begin{lem}[Pinching is group averaging]\label{lem:pinching-average}
		For every $A\in\sn$,
		\[
		\pi_\Pi(A)=\Rey_{\mathcal{D}_\Pi}(A)=\frac{1}{|\mathcal{D}_\Pi|}\sum_{D\in\mathcal{D}_\Pi} DAD.
		\]
	\end{lem}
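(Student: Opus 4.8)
The plan is to verify the identity entrywise, reducing everything to a one-variable sign cancellation. For $i\in[n]$ write $\ell(i)\in\{1,\dots,m\}$ for the index of the block of $\Pi$ containing $i$, so that a typical element of $\mathcal{D}_\Pi$ is $D(\varepsilon)=\mathrm{diag}(\varepsilon_1\mathbf{1}_{S_1},\dots,\varepsilon_m\mathbf{1}_{S_m})$ with $\varepsilon=(\varepsilon_1,\dots,\varepsilon_m)\in\{\pm1\}^m$, and the $i$-th diagonal entry of $D(\varepsilon)$ equals $\varepsilon_{\ell(i)}$. Since the blocks $S_\ell$ are nonempty, distinct tuples $\varepsilon$ give distinct matrices $D(\varepsilon)$, so $|\mathcal{D}_\Pi|=2^m$. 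For any $A=(a_{ij})\in\sn$ a direct computation gives $(D(\varepsilon)\,A\,D(\varepsilon))_{ij}=\varepsilon_{\ell(i)}\varepsilon_{\ell(j)}\,a_{ij}$, hence
\[
\bigl(\Rey_{\mathcal{D}_\Pi}(A)\bigr)_{ij}
=\Biggl(\frac{1}{2^m}\sum_{\varepsilon\in\{\pm1\}^m}\varepsilon_{\ell(i)}\varepsilon_{\ell(j)}\Biggr)a_{ij}
\;=:\;c_{ij}\,a_{ij}.
\]

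Next I would evaluate the scalar $c_{ij}$ in two cases. If $i$ and $j$ lie in the same block of $\Pi$, then $\ell(i)=\ell(j)$ and $\varepsilon_{\ell(i)}\varepsilon_{\ell(j)}=\varepsilon_{\ell(i)}^2=1$ for every $\varepsilon$, so $c_{ij}=1$. If $i$ and $j$ lie in different blocks, then $\ell(i)\ne\ell(j)$, and since these two coordinates of $\varepsilon$ range independently over $\{\pm1\}$ we get $\sum_{\varepsilon\in\{\pm1\}^m}\varepsilon_{\ell(i)}\varepsilon_{\ell(j)}=\bigl(\sum_{\delta\in\{\pm1\}}\delta\bigr)^{2}\,2^{\,m-2}=0$, so $c_{ij}=0$. (Equivalently, pairing each $\varepsilon$ with the tuple obtained by flipping the single coordinate $\varepsilon_{\ell(i)}$ cancels the sum in pairs.)

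Putting the two cases together, the averaged matrix has $(i,j)$ entry equal to $a_{ij}$ when $i,j$ lie in a common block and equal to $0$ otherwise, which is exactly the definition of the block-diagonal pinching $\pi_\Pi(A)$. I do not expect a genuine obstacle here: the content is the elementary identity $(D(\varepsilon)\,A\,D(\varepsilon))_{ij}=\varepsilon_{\ell(i)}\varepsilon_{\ell(j)}\,a_{ij}$ together with the parity cancellation above, and the only thing to watch is the bookkeeping---the sign attached to the index $i$ depends only on its block $\ell(i)$, which is precisely why on-block entries are preserved while off-block entries average to zero.
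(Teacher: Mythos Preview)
Your proof is correct and follows essentially the same route as the paper's: both verify the identity entrywise by noting that $(DAD)_{ij}=d_id_ja_{ij}$, that same-block entries satisfy $d_id_j\equiv1$, and that off-block entries average to zero by sign cancellation. Your version simply makes the parametrization $D(\varepsilon)$ and the count $|\mathcal{D}_\Pi|=2^m$ more explicit.
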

	
	\begin{proof}
		Let $A=(a_{ij})$.  For a fixed pair $(i,j)$, the $(i,j)$-entry of $DAD$ equals $d_i d_j a_{ij}$.
		If $i$ and $j$ belong to the same block of $\Pi$, then $d_i=d_j$ for every $D\in\mathcal{D}_\Pi$, hence $d_i d_j=1$
		and the average preserves $a_{ij}$.  If $i$ and $j$ are in different blocks, then $d_i d_j$ takes values $\pm1$
		equally often as $D$ ranges over $\mathcal{D}_\Pi$, and the average is $0$.
		Thus the average matrix agrees entrywise with $\pi_\Pi(A)$.
	\end{proof}
	
	\subsubsection*{A hyperbolic Fischer--Hadamard inequality via concavity and symmetries}
	
	We can now recover the hyperbolic Fischer--Hadamard inequality for PSD-stable lpm polynomials
	from \cite[Theorem~2.4]{Ble23}.
	
	\begin{thm}[Hyperbolic Fischer--Hadamard inequality for lpm polynomials]\label{thm:lpm-fischer}
		Let $\mathcal{P}$ be a homogeneous PSD-stable lpm polynomial of degree $k$ on $\sn$
		(hence hyperbolic with respect to $I_n$), and let $\Pi$ be a partition of $[n]$.
		Then for every $A\in H(\mathcal{P})$,
		\[
		\pi_\Pi(A)\in H(\mathcal{P})
		\qquad\text{and}\qquad
		\mathcal{P}(\pi_\Pi(A))\ \ge\ \mathcal{P}(A).
		\]
	\end{thm}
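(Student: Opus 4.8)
The plan is to obtain Theorem~\ref{thm:lpm-fischer} as a direct application of the hyperbolic symmetrization principle, Theorem~\ref{thm:hyperbolic-symmetrization}, with all of its hypotheses supplied by Lemmas~\ref{lem:lpm-sign} and~\ref{lem:pinching-average}. Concretely, I would take $V=\sn$, the homogeneous lpm polynomial $\mathcal{P}$ of degree $k$, the distinguished direction $e=I_n$ (PSD-stability being precisely hyperbolicity with respect to $I_n$, so that $H(\mathcal{P})=\Gamma(\mathcal{P};I_n)$), and the finite group $G=\mathcal{D}_\Pi$ acting on $\sn$ by conjugation $D\cdot A:=DAD$.

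First I would confirm that this is a genuine linear action: the map $A\mapsto DAD$ is $\R$-linear and sends $\sn$ into $\sn$, and since the elements of $\mathcal{D}_\Pi$ are diagonal (hence commute) and satisfy $D^{2}=I_n$, one has $D_1\cdot(D_2\cdot A)=(D_1D_2)A(D_2D_1)=(D_1D_2)\cdot A$, so $D\mapsto(A\mapsto DAD)$ is a homomorphism from $\mathcal{D}_\Pi\cong(\mathbb{Z}/2\mathbb{Z})^{m}$ into $\mathrm{GL}(\sn)$. Next, hypothesis~(i) of Theorem~\ref{thm:hyperbolic-symmetrization} holds because $D\cdot I_n=DI_nD=D^{2}=I_n$ for every $D\in\mathcal{D}_\Pi$; and hypothesis~(ii) is precisely Lemma~\ref{lem:lpm-sign}, which applies since $\mathcal{D}_\Pi\subseteq\mathcal{D}_n$, giving $\mathcal{P}(DAD)=\mathcal{P}(A)$ for all $D\in\mathcal{D}_\Pi$.

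With the hypotheses verified, Theorem~\ref{thm:hyperbolic-symmetrization}(a) gives, for every $A\in H(\mathcal{P})$, that $DAD\in H(\mathcal{P})$ for all $D\in\mathcal{D}_\Pi$ and that $\Rey_{\mathcal{D}_\Pi}(A)\in H(\mathcal{P})$; by Lemma~\ref{lem:pinching-average} this average equals $\pi_\Pi(A)$, so $\pi_\Pi(A)\in H(\mathcal{P})$. In the same way, Theorem~\ref{thm:hyperbolic-symmetrization}(b) gives $\mathcal{P}(\Rey_{\mathcal{D}_\Pi}(A))\ge\mathcal{P}(A)$, and invoking Lemma~\ref{lem:pinching-average} once more rewrites the left-hand side as $\mathcal{P}(\pi_\Pi(A))$, completing the argument.

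As for where the difficulty lies, essentially no obstacle remains at this stage: the substantive content --- G{\aa}rding concavity combined with Jensen's inequality, the sign-flip invariance of principal minors, and the identification of block pinching with a Reynolds average --- has already been isolated in the preceding results. The only points requiring a moment's care are purely formal, namely that $A\mapsto DAD$ really defines a $G$-action (which uses commutativity of diagonal matrices) and that $\mathcal{D}_\Pi$ is a subgroup of $\mathcal{D}_n$ so that Lemma~\ref{lem:lpm-sign} applies verbatim. If anything, the genuinely decisive step was the modeling choice of recognizing block pinching as group averaging, which is exactly what Lemma~\ref{lem:pinching-average} packages.
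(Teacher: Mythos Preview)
Your proposal is correct and follows essentially the same route as the paper: set $V=\sn$, $e=I_n$, $G=\mathcal{D}_\Pi$ acting by conjugation, verify the hypotheses of Theorem~\ref{thm:hyperbolic-symmetrization} via Lemma~\ref{lem:lpm-sign} and the observation $DI_nD=I_n$, and then identify $\Rey_{\mathcal{D}_\Pi}(A)$ with $\pi_\Pi(A)$ via Lemma~\ref{lem:pinching-average}. If anything, your write-up is slightly more careful than the paper's in explicitly checking that $A\mapsto DAD$ defines a genuine linear $G$-action (which does use that $\mathcal{D}_\Pi$ is abelian), whereas the paper leaves this implicit.
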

	
	\begin{proof}
		By Lemma~\ref{lem:lpm-sign}, $\mathcal{P}$ and its hyperbolicity cone $H(\mathcal{P})$ are invariant under
		$A\mapsto DAD$ for all $D\in\mathcal{D}_\Pi$.
		In particular, $DAD\in H(\mathcal{P})$ whenever $A\in H(\mathcal{P})$.
		Since $H(\mathcal{P})$ is convex and $\pi_\Pi(A)$ is the average of the orbit (Lemma~\ref{lem:pinching-average}),
		we get $\pi_\Pi(A)\in H(\mathcal{P})$.
		
		Now apply Theorem~\ref{thm:hyperbolic-symmetrization} to the vector space $V=\sn$,
		the polynomial $\mathcal{P}$ (hyperbolic with respect to $I_n$), and the group $G=\mathcal{D}_\Pi$ acting by conjugation.
		The conclusion yields
		\[
		\mathcal{P}(\pi_\Pi(A))=\mathcal{P}(\Rey_{\mathcal{D}_\Pi}(A))\ \ge\ \mathcal{P}(A).
		\]
	\end{proof}
	
	\begin{cor}[Hyperbolic Hadamard inequality for lpm polynomials]\label{cor:lpm-hadamard}
		In Theorem~\ref{thm:lpm-fischer}, take $\Pi=\{\{1\},\dots,\{n\}\}$.
		Then $\pi_\Pi(A)=\diag(A)$ (diagonal pinching), and
		\[
		\mathcal{P}(\diag(A))\ \ge\ \mathcal{P}(A)
		\qquad\text{for all }A\in H(\mathcal{P}).
		\]
	\end{cor}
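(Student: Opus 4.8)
The proof is an immediate specialization of Theorem~\ref{thm:lpm-fischer} to the finest partition, so the only work is to unwind the two bookkeeping identities built into that theorem. First I would pin down the pinching map for $\Pi=\{\{1\},\dots,\{n\}\}$. Since every block $S_\ell=\{\ell\}$ is a singleton, any two indices $i\neq j$ lie in different blocks, so by the definition of $\pi_\Pi$ the matrix $\pi_\Pi(A)$ zeroes out every off-diagonal entry while leaving $a_{ii}$ untouched. Hence $\pi_\Pi(A)$ is precisely the diagonal matrix $\diag(a_{11},\dots,a_{nn})$, which in the statement of the corollary is abbreviated $\diag(A)$ (now read as an element of $\sn$, consistent with the domain of $\mathcal{P}$); this matches the description ``diagonal pinching''. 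As an aside, for this $\Pi$ the block-sign group $\mathcal{D}_\Pi$ collapses to the full group $\mathcal{D}_n$ of diagonal sign matrices, since each generator acts on a single coordinate — but this is not needed, because Theorem~\ref{thm:lpm-fischer} already accommodates an arbitrary partition.

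Second, I would simply invoke Theorem~\ref{thm:lpm-fischer} with this $\Pi$. Since $\mathcal{P}$ is a homogeneous PSD-stable lpm polynomial (hence hyperbolic with respect to $I_n$) and $A\in H(\mathcal{P})$, the theorem yields $\pi_\Pi(A)\in H(\mathcal{P})$ together with $\mathcal{P}(\pi_\Pi(A))\ge\mathcal{P}(A)$. Substituting $\pi_\Pi(A)=\diag(A)$ from the first step gives $\diag(A)\in H(\mathcal{P})$ and $\mathcal{P}(\diag(A))\ge\mathcal{P}(A)$, which is the claim.

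There is essentially no obstacle here: all the content lives in Theorem~\ref{thm:lpm-fischer}, and behind it in Lemma~\ref{lem:lpm-sign}, Lemma~\ref{lem:pinching-average}, and the symmetrization principle Theorem~\ref{thm:hyperbolic-symmetrization}. The one point deserving a moment's care is purely notational — making sure $\diag(A)$ in the corollary is parsed as the diagonal matrix rather than the diagonal vector of Section~\ref{sec:intro}. Finally, I would record the sanity check that taking $\mathcal{P}=\det$ (the lpm polynomial with $c_{[n]}=1$ and all other $c_J=0$, which is PSD-stable and hyperbolic with respect to $I_n$) recovers the classical Hadamard inequality $\det(\diag(A))\ge\det(A)$ for positive semidefinite $A$, in agreement with Theorem~\ref{thm:hadamard}.
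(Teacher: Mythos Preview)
Your proposal is correct and matches the paper's approach exactly: the corollary in the paper carries no separate proof, since the statement itself already says ``take $\Pi=\{\{1\},\dots,\{n\}\}$ in Theorem~\ref{thm:lpm-fischer}'', and your write-up just makes the two bookkeeping points (that $\pi_\Pi(A)=\diag(A)$ as a matrix, and that the theorem then applies verbatim) explicit. The added remarks about $\mathcal{D}_\Pi=\mathcal{D}_n$ and the $\det$ sanity check are harmless extras.
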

	
	\begin{remark}
		The argument above isolates a minimal mechanism:
		\emph{(i) invariance under a symmetry group, (ii) convexity of the hyperbolicity cone, (iii) G{\aa}rding concavity}.
		In \cite{Ble23}, Fischer--Hadamard type inequalities are proved in a broader stability framework and are complemented by
		stronger determinantal inequalities (including hyperbolic Koteljanskii-type inequalities) obtained via stability-preserver
		and negative dependence technology.
		Our proof does not replace those deeper tools; rather, it clarifies that the Fischer--Hadamard direction is, at heart,
		a concavity-and-symmetry phenomenon.
	\end{remark}
	
%
%
%
	
	\section*{Acknowledgments}
	
	Teng Zhang is supported by the China Scholarship Council, the Young Elite Scientists
	Sponsorship Program for PhD Students (China Association for Science and Technology),
	and the Fundamental Research Funds for the Central Universities at Xi'an Jiaotong
	University (Grant No.~xzy022024045).
	


\begin{thebibliography}{99}
		
		\bibitem{BGLS01}
		H. H. Bauschke, O. G{\"u}ler, A. S. Lewis and H. S. Sendov,
		Hyperbolic polynomials and convex analysis,
		Canad. J. Math. {\bf 53} (3) (2001), 470--488.
		
		\bibitem{Bha97}
		R.~Bhatia,
		\emph{Matrix Analysis},
		Graduate Texts in Mathematics, Vol.~169,
		Springer-Verlag, New York, 1997.
		
		\bibitem{Ble23}
		G. Blekherman, M. Kummer, R. Sanyal, K. Shu and S. Sun,
		Linear principal minor polynomials: hyperbolic determinantal inequalities
		and spectral containment,
		Int. Math. Res. Not. IMRN {\bf 2023}, no.~24, 21346--21380.
		
		\bibitem{BS23}
		P. Braun and H.~S. Sendov,
		On the Hadamard-Fischer inequality, the inclusion-exclusion formula,
		and bipartite graphs,
		Linear Algebra Appl. {\bf 668} (2023), 64--92.
		
		\bibitem{DK15}
		H. Derksen and G. Kemper,
		\emph{Computational Invariant Theory}, 2nd ed.,
		Encyclopaedia of Mathematical Sciences, Vol.~130,
		Springer, Heidelberg, 2015.
		
		\bibitem{Gar51}
		L. G{\aa}rding,
		Linear hyperbolic partial differential equations with constant coefficients,
		Acta Math. {\bf 85} (1951), 1--62.
		
		\bibitem{Gar59}
		L.~G{\aa}rding,
		An inequality for hyperbolic polynomials,
		J. Math. Mech. \textbf{8} (1959), no.~6, 957--965.
		
		\bibitem{Gul97}
		O. G{\"u}ler,
		Hyperbolic polynomials and interior point methods for convex programming,
		Math. Oper. Res. \textbf{22} (1997), no.~2, 350--377.
		
		\bibitem{HL13}
		F.~R.~Harvey and H.~B.~Lawson, Jr.,
		G{\aa}rding's theory of hyperbolic polynomials,
		Comm. Pure Appl. Math. \textbf{66} (2013), no.~7, 1102--1128.
		
		\bibitem{HLP52}
		G. H. Hardy, J. E. Littlewood and G. P\'olya,
		\emph{Inequalities}, 2nd ed.,
		Cambridge University Press, Cambridge, 1952.
		
		\bibitem{HW15}
		C.~J. Hillar and A. Wibisono,
		A Hadamard-type lower bound for symmetric diagonally dominant positive matrices,
		Linear Algebra Appl. {\bf 472} (2015), 135--141.
		
		\bibitem{HJ13}
		R.~A.~Horn and C.~R.~Johnson,
		\emph{Matrix Analysis}, 2nd ed.,
		Cambridge University Press, Cambridge, 2013.
		
		\bibitem{Lan14}
		K.~L. Lange,
		Hadamard's determinant inequality,
		Amer. Math. Monthly {\bf 121} (2014), no.~3, 258--259.
		
		\bibitem{Le21}
		N.~Q.~Le,
		A spectral characterization and an approximation scheme for the Hessian eigenvalue,
		Rev. Mat. Iberoam. \textbf{38} (2022), no.~5, 1473--1500.
		
		\bibitem{Le22}
		N.~Q. Le,
		Hadamard-type inequalities for $k$-positive matrices,
		Linear Algebra Appl. {\bf 635} (2022), 159--170.
		
		\bibitem{Lin14}
		M. Lin,
		An Oppenheim type inequality for a block Hadamard product,
		Linear Algebra Appl. {\bf 452} (2014), 1--6.
		
		\bibitem{LS20}
		M. Lin and G.~J. Sinnamon,
		Revisiting a sharpened version of Hadamard's determinant inequality,
		Linear Algebra Appl. {\bf 606} (2020), 192--200.
		
		\bibitem{MOA11}
		A. W. Marshall, I. Olkin and B. C. Arnold,
		\emph{Inequalities: Theory of Majorization and Its Applications}, 2nd ed.,
		Springer Series in Statistics,
		Springer, New York, 2011.
		
		\bibitem{RWH17}
		M. R\'o\.za\'nski, R. Witu\l a{} and E. Hetmaniok,
		More subtle versions of the Hadamard inequality,
		Linear Algebra Appl. {\bf 532} (2017), 500--511.
		
		\bibitem{Yam24}
		A. Yamada,
		Oppenheim--Schur's inequality and RKHS,
		Oper. Matrices {\bf 18} (2024), no.~3, 711--733.
		
		\bibitem{Zha11}
		F. Zhang,
		\emph{Matrix Theory: Basic Results and Techniques}, 2nd ed.,
		Universitext, Springer, New York, 2011.
		
	\end{thebibliography}
\end{document}